\documentclass[12pt]{amsart}
\usepackage{amssymb,tikz}
\usepackage{verbatim}
\usepackage{graphicx}
\usepackage[cmtip,all]{xy}
\usepackage{amsmath}
\usepackage{amsfonts,amssymb,amsthm}
\usepackage{times}
\usepackage{amscd}
\usepackage{epsfig}
\usepackage{float}

\usepackage{pb-diagram}


\usepackage{srctex}
\definecolor{refkey}{gray}{.5}   
\definecolor{labeled}{gray}{.5} 
\newtheorem{theo}{Theorem}[section]
\newtheorem{thm}[theo]{Theorem}
\newtheorem{lem}[theo]{Lemma}
\newtheorem{cor}[theo]{Corollary}

\newtheorem*{thm*}{Theorem}

\theoremstyle{definition}
\newtheorem{dfn}[theo]{Definition}
\newtheorem{exa}[theo]{Example}

\theoremstyle{remark}

\numberwithin{equation}{section}



\def\L{\mathbb{L}}
\newcommand{\ph}{\varphi}
\def\dc{\mathcal{D}}
\newcommand{\cri}{\mathrm{Cr}}
\newcommand{\di}{\mathrm{Di}}
\newcommand{\ta}{\theta}
\newcommand{\hc}{\mbox{$\mathbb{\widehat{C}}$}}
\newcommand{\iU}{U^\infty}
\newcommand{\ql}{quadrilateral{}}

\newcommand{\ca}{\mathrm{CA}}

\newcommand{\C}{\mathbb{C}}

\newcommand{\disk}{\mathbb{D}}

\newcommand{\cdisk}{\ol{\mathbb{D}}}
\newcommand{\bbd}{\mathbb{D}}

\newcommand{\ol}{\overline}

\newcommand{\sm}{\setminus}

\newcommand{\n}{\ol{n}}
\newcommand{\hell}{\hat{\ell}}

\newcommand{\qml}{\mathrm{QML}}
\newcommand{\bd}{\mathrm{Bd}}

\newcommand{\lam}{\mathcal{L}}
\newcommand{\hlam}{\widehat{\mathcal{L}}}

\newcommand{\ch}{\mathrm{CH}}

\newcommand{\si}{\sigma}
\newcommand{\0}{\varnothing}
\newcommand{\uc}{\mathbb{S}}

\newcommand{\al}{\alpha}

\newcommand{\be}{\beta}

\newcommand{\M}{\mathcal{M}}

\newcommand{\laq}{\mathbb{L}^q}
\newcommand{\olq}{\ol{\mathbb{L}^q_2}}

\renewcommand{\le}{\leqslant}

\begin{document}
\date{July 17, 2017}

\title[Perfect subspaces of quadratic laminations]
{Perfect subspaces of quadratic laminations}

\dedicatory{Dedicated to the memory of Tan Lei}

\author{Alexander~Blokh}

\address[Alexander~Blokh]
{Department of Mathematics\\ University of Alabama at Birmingham\\
Birmingham, AL 35294}

\email{ablokh@math.uab.edu}


\author{Lex Oversteegen}

\address[Lex Oversteegen]
{Department of Mathematics\\ University of Alabama at Birmingham\\
Birmingham, AL 35294}

\email{overstee@uab.edu}


\author{Vladlen~Timorin}

\address[V.~Timorin]{National Research University Higher School of Economics\\
6 Usacheva St., 119048 Moscow, Russia}

\thanks{The third named author was partially supported by the Russian Academic Excellence Project '5-100'}

\subjclass[2010]{Primary 37F20; Secondary 37F10, 37F50}

\keywords{Complex dynamics; laminations; Mandelbrot set; Julia set}

\begin{abstract}
The combinatorial Mandelbrot set is a continuum in the plane, whose boundary can be defined, up to a homeomorphism,
  as the quotient space of the unit circle by an explicit equivalence relation.
This equivalence relation was described by Douady and, in different terms, by Thurston.
Thurston used quadratic invariant laminations as a major tool.
As has been previously shown by the authors, the combinatorial Mandelbrot set can be interpreted as a quotient of the space of all limit quadratic invariant laminations.
The topology in the space of laminations is defined by the Hausdorff distance.
In this paper, we describe two similar quotients.
In the first case, the identifications are the same but the space is smaller than that taken for the Mandelbrot set.
The result (the quotient space) is obtained from the Mandelbrot set by ``unpinching'' the transitions between adjacent hyperbolic components.
In the second case, we do not identify non-renormalizable laminations while
identifying renormalizable laminations according to which hyperbolic lamination
they can be ``unrenormalised'' to.

\end{abstract}

\maketitle

\section*{Introduction}\label{s:intro}

Understanding families of one-dimensional complex polynomials is a central objective in complex dynamics.
An important step towards this objective involves constructing combinatorial models for such families.
The most famous case when such a model is known is the \emph{quadratic family}, i.e., the family of quadratic polynomials $P_c(z)=z^2+c$.
The set of all parameters $c$ such that $P_c$ has connected Julia set (equivalently, the critical
$P_c$-orbit $0$, $P_c(0)=c$, $\dots$, is bounded) is called the \emph{Mandelbrot set} and is denoted by $\M_2$.
In his seminal preprint \cite{thu85}, William Thurston introduced a variety of (mostly combinatorial and geometric by nature) new tools and constructed a combinatorial geometric model $\M^c_2$ of $\M_2$.
There exists a monotone map from $\M_2$ onto $\M^c_2$.
A major open conjecture in complex dynamics, referred to as MLC, states that this map is a homeomorphism.

The set $\M^c_2$ has a very rich and fascinating structure.
It contains a countable and dense family of homeomorphic copies of itself
whose precise location can be deduced from the description of $\M^c_2$.
Thus, $\M^c_2$ is an example of a so-called \emph{fractal} set.
According to Adrien Douady, the process of constructing $\M^c_2$ can be described as ``pinching the closed unit disk $\cdisk$''
which is why $\M^c_2$ is often said to be the ``pinched disk model'' of $\M_2$.
``Pinching'' refers to collapsing a certain chord of $\cdisk$;
evidently, each act of pinching creates an increasingly complicated new quotient space of $\cdisk$.
Therefore it is natural to attempt to understand the structure of the ``pinched disk model'' by doing only
some of the pinchings and ignoring the other ones.
Somewhat loosely, one can say that in order to understand $\M^c_2$, one may try to partially ``unpinch'' $\M^c_2$ and thus obtain partial quotient spaces of $\cdisk$ as steps towards understanding $\M^c_2$.

This by itself provides a motivation for studying partially ``unpinched'' versions of $\M^c_2$.
There is however another no less important thought that motivated the authors.
Namely, while in the quadratic case the problem of constructing an adequate combinatorial
model of $\M_2$ was solved by Thurston in \cite{thu85}, producing a similar combinatorial model in the cubic case, let alone in the arbitrary degree $d$ case, presents a difficult problem that has not yet been solved.
Partial quotients of $\cdisk$ admit cubic analogs.
The latter may serve as simplified models of the cubic connectedness locus.

\begin{figure}[!htb]
    \centering
        \includegraphics[height=0.4\textheight]{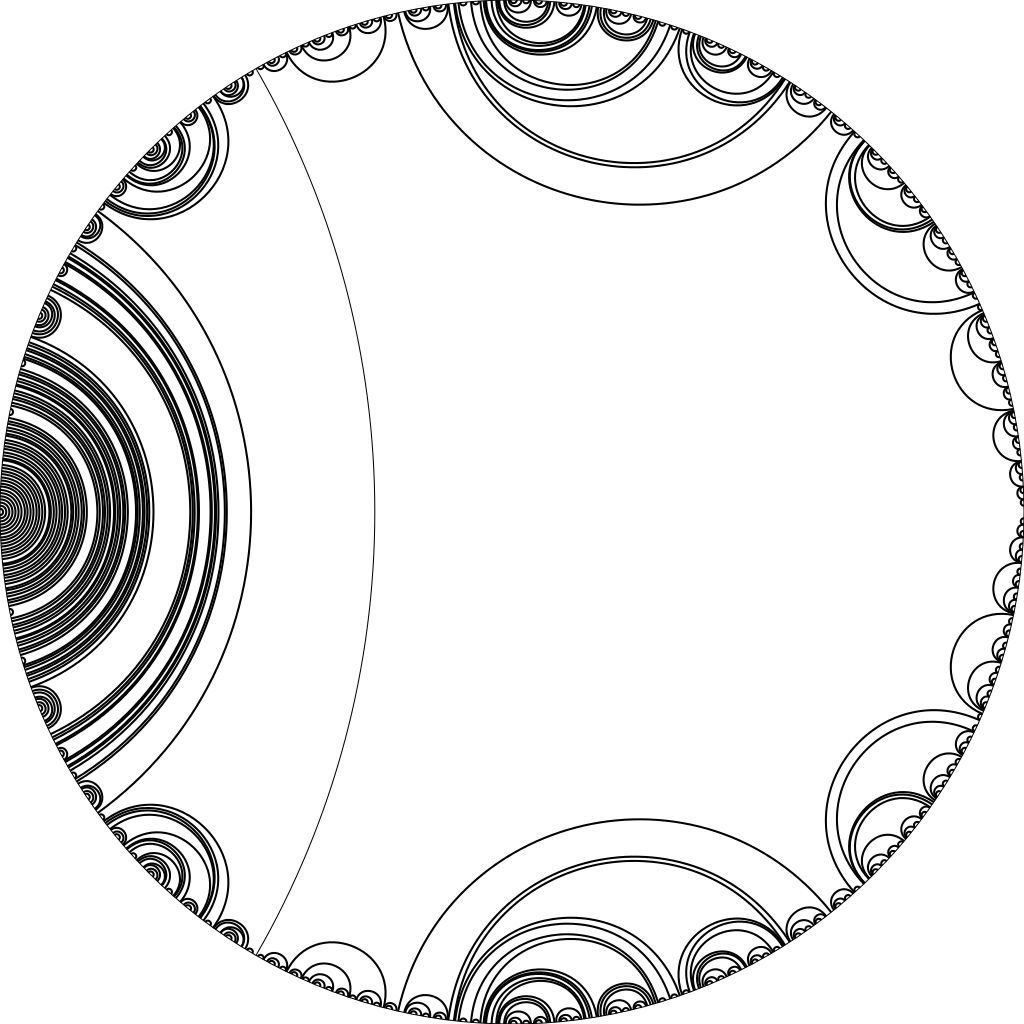}
        \caption{The geodesic lamination $\qml$}
        \label{fig:qml}
\end{figure}

We begin with an informal overview.
We assume basic knowledge of complex dynamics.
Later on, especially in the last two sections of the paper, we will also use well-known facts concerning the
structure of the combinatorial Mandelbrot set $\M^c_2$.

\subsection*{Laminational equivalence relations}
\emph{Laminational equivalence relations} are closed equivalence
relations $\sim$ on the unit circle $\uc$ in the complex plane $\C$
such that all classes are finite and the convex hulls of all classes
are pairwise disjoint; denote the corresponding quotient map of $\uc$
by $\psi_\sim$. Notice that $\psi_\sim$ can be canonically extended
over $\C$ by assuming that all classes of points of $\C$ outside
$\cdisk$ are these points themselves while inside $\cdisk$
any convex hull of a $\sim$-class is declared to be
one class of equivalence while the points inside infinite gaps of
$\lam_\sim$ are treated in the same way as points outside of $\cdisk$,
i.e. are declared to be degenerate classes. This extends $\sim$
and $\psi_\sim$ onto the entire complex plane. We are interested in
quotient spaces of $\cdisk$ and $\uc$ under the map $\psi_\sim$.

Each laminational equivalence relation $\sim$ gives rise to an
associated geometric object $\lam_\sim$ called a \emph{geodesic
lamination generated by $\sim$}. It is the set of all chords (called
\emph{leaves}) in the boundaries of the convex hulls (in the closed
unit disk) of all equivalence classes of $\sim$. The union of all leaves of
$\lam_\sim$ and $\uc$ is a continuum $\lam_\sim^+$ called the \emph{solid}
of $\lam_\sim$ (or of $\sim$). The closure in $\C$ of a non-empty
component of $\disk\sm \lam^+_\sim$ is called a \emph{gap} of $\lam_\sim$.
If $G$ is a gap or a leaf, call the set $G'=\uc\cap G$ the \emph{basis
of $G$}. A gap is said to be \emph{finite $($infinite, countable,
uncountable$)$} if its basis is finite (infinite, countable,
uncountable); countable gaps are only possible for
generally defined geodesic laminations (see below) but not for the just
defined geodesic lamination generated by $\sim$. For $\lam_\sim$ infinite
gaps are associated with bounded complementary domains of $\uc/\sim$ by
the corresponding quotient map.

Sometimes we relax the assumption of finiteness of
classes of equivalence and talk of laminational
equivalence relations \emph{possibly with infinite classes}
\cite{bopt17}. All related notions can be defined for either
laminational equivalence relations or for laminational equivalence
relations possibly with infinite classes, thus in what follows we only
define them for laminational equivalence relations.
In particular, this concerns the associated quotient maps and quotient
spaces.
In case when we deal with a laminational equivalence relation
$\sim$, all the infinite gaps of $\sim$ give rise to bounded
complementary domains of the corresponding quotient space.

Assume now that $\lam_\sim$ is \emph{perfect} (i.e., there are no
isolated leaves in $\lam_\sim$). Then, no two gaps of $\lam_\sim$
intersect. There are various laminational equivalence relations
possibly with infinite classes that generate $\lam_\sim$. This is
because some infinite gaps of $\lam_\sim$ can be declared convex hulls
of equivalence classes (recall that $\lam_\sim$ is perfect!). Now,
consider the laminational equivalence relation $\sim^d$ in which
\emph{all} infinite gaps of $\lam_\sim$ are declared convex hulls of
classes (equivalently, for each infinite gap $G$ of $\lam_\sim$ its
basis $G'$ is declared one class). It is easy to see that
$\uc/\sim^d=\cdisk/\sim^d$ is a \emph{dendrite} (a locally connected
continuum without subsets homeomorphic to a circle). This dendrite can
be obtained from $\cdisk/\sim$ by collapsing the closures of all bounded complementary
domains of $\uc/\sim$ (recall that they are associated to infinite gaps
of $\lam_\sim$) to points. We call $\sim^d$ and $\cdisk/\sim^d$ the
\emph{dendritic version} of $\sim$ and $\cdisk/\sim$.

A laminational equivalence relation is ($\si_d$-)\emph{invariant} if it is ``respected'' by the map $\si_d(z)=z^d:\uc\to\uc$; a formal definition will be given later.
The map $\si_d$ induces a \emph{topological polynomial}
$f_\sim:\uc/\hspace{-4pt}\sim\,\to \uc/\hspace{-4pt}\sim$ from the \emph{topological Julia set} $J_\sim=\uc/\hspace{-4pt}\sim$ to itself.
If, for a polynomial $P$, its Julia set $J(P)$ is locally connected, then $J(P)$ can be identified with the topological Julia set $\uc/\sim$ of some laminational equivalence relation $\sim$ such that  $\psi_\sim$ identifies with the extension to the unit circle of the normalized Riemann map of the basin of infinity of $P$.
Thus, in this case, $P|_{J(P)}$ is topologically conjugate to (and even identifies with) $f_\sim$.

From now on, ``invariant'' means ``invariant under the map $\si_d$''.
In particular, given a chord $\ell=\ol{ab}\in\lam_\sim$, with endpoints $a, b\in\uc$, the chord
$\ol{\si_d(a)\si_d(b)}$ is also a (possibly degenerate) chord of $\lam_\sim$, and we write $\si_d(\ell)=\ol{\si_d(a)\si_d(b)}$.
Not only should the collection $\lam_\sim$ of chords be invariant but also boundaries of complementary to $\lam_\sim$
domains of $\cdisk$ should map forward in the orientation preserving fashion (except that some edges may collapse to points).

\subsection*{Invariant geodesic laminations}
Although $\si_d$ is only defined on the unit circle $\uc$, it makes sense to talk about the images $\si_d(X)$ for subsets $X$ of the closed unit disk
  obtained as convex hulls of $X\cap\uc$; by definition, we set $\si_d(X)$ to be the convex hull of $\si_d(X\cap\uc)$.
Thurston in his famous paper \cite{thu85} introduced and studied geometric objects similar to $\lam_\sim$ but more general, and called them \emph{invariant geodesic laminations}.
Thus, $\si_d$-invariant geodesic laminations $\lam$ are closed collections of chords in $\cdisk$ (\emph{leaves})
with special properties relating the leaves and the map $\si_d$. It will be convenient to consider
also unions of all leaves of $\lam$ together with all points of $\uc$; such unions
are denoted $\lam^+$ and are called \emph{solids} of laminations.
For any laminational equivalence relation $\sim$, the set $\lam_\sim$ is a $\si_d$-invariant geodesic lamination.
Such geodesic laminations are called $q$-laminations ($q$ from ``e\textbf{q}uivalence'').

An important reason for defining geodesic laminations is to provide ``visualization'' of laminational
equivalence relations and use the former to ``topologize'' the latter.
This is done by defining \emph{Hausdorff metric}  on geodesic laminations.
To this end, consider the space of all chords (including degenerate ones) in the unit disk.
Equivalently, we can consider the space of all unordered pairs of points of $\uc$ (so that a pair $(\al, \be)$ is identified with the pair $(\be, \al)$).
It follows that the space in question is a ``half-torus'' $E$.
If a topological 2-torus is represented as a quotient space of the square $0\le x,y\le 1$ by the equivalence relation identifying $(x,0)$ with $(x,1)$ and $(0,y)$ with $(1,y)$ for all $x$, $y\in [0,1]$, then $E$ is given by $x\le y$ with the additional identification of $(0,x)$ with $(x,1)$.
Topologically, this is a M\"obius strip.
Every geodesic lamination $\lam$ can be viewed as a closed subset of $E$ (each leaf of $\lam$ is a point of $E$).
Define the Hausdorff distance between two geodesic laminations $\lam_1$, $\lam_2$ by viewing $\lam_1$ and $\lam_2$ as subsets of $E$.
When speaking of geodesic laminations, we will always consider the topology on them defined by the Hausdorff metric.
We will speak of limits of laminations only in this sense.

If $d=2$, then the corresponding geodesic laminations, laminational equivalence relations, topological polynomials and Julia sets are said to be \emph{quadratic}.
However, for clarity of terminology, in this paper we address the corresponding geodesic laminations and laminational equivalence relations as \emph{$\si_2$-invariant}.
Thurston \cite{thu85} introduced and studied properties of individual $\si_d$-invariant laminations while also
obtaining striking results on the entire family of $\si_2$-invariant geodesic laminations and parameterizing them with the help of his famous \emph{quadratic minor lamination} $\qml$.

For every $\si_2$-invariant geodesic lamination $\lam$, Thurston defines its \emph{major} $M$ as a longest leaf of $\lam$.
It is easy to see that either $\lam$ has a unique major, which is then a diameter of $\cdisk$, or $\lam$ has two distinct majors with equal $\si_2$-images.
Then Thurston defines $m=\si_2(M)$, calls it the \emph{minor} of $\lam$, and shows that \emph{the family of the minors of all $\si_2$-invariant geodesic laminations is a geodesic lamination itself!}
This geodesic lamination is the \emph{quadratic minor lamination} $\qml$ that can be generated by an equivalence relation $\sim_\qml$.
Each class of $\sim_\qml$ is associated with a unique $\si_2$-invariant laminational equivalence relation and the
corresponding topological polynomial.
The quotient space $\uc/\hspace{-4pt}\sim_\qml$ parameterizes the space of all $\si_2$-invariant
laminational equivalence relations (equivalently, the space of all topological polynomials of degree two).
This yields a set $\M^c_2$ in the plane, defined up to a homeomorphism, whose boundary $\bd(\M^c_2)$ is homeomorphic to $\uc/\sim_\qml$.

Geodesic laminations were introduced not only to study dynamics of individual complex polynomials but also to
construct combinatorial (laminational) models for spaces of complex polynomials.
This is done with the help of the following approach.
Firstly, with each complex polynomial from the space under consideration, associate its laminational equivalence relation and the corresponding topological
polynomial.
Observe that, while this is straightforward in the locally connected case and in some other cases (see below), in case of more complicated topology of the Julia sets a special construction may be required.
Secondly, describe (parameterize) the space of topological polynomials associated with all complex polynomials from the chosen space, and use the thus obtained space of topological polynomials as the model for the given space of complex polynomials.

In order to implement this plan, one has to overcome several difficulties.
First, the association of a laminational equivalence relation with a complex polynomial may be hard.
This is related to the fact that the Julia set $J(P)$ of a complex polynomial $P$ may have complicated topology.
Still, in many cases $P$ gives rise to a lamination, even if $J(P)$ is not locally connected.
For example, this happens in the case when $J(P)$ is connected and has no periodic non-repelling points.
In this case, the association is due to Jan Kiwi \cite{kiwi97}.
The corresponding topological Julia sets are dendrites, and the corresponding complex polynomials are called \emph{dendritic}.
Notice that the actual Julia set of a dendritic polynomial does not have to be a dendrite.

Known facts from polynomial dynamics imply that dendritic polynomials are dense in the boundary of the Mandelbrot set.
Thus, one can extend the correspondence between complex quadratic dendritic polynomials and their laminational equivalence relations onto $\bd(\M_2)$.
We mean the extension by continuity, or the closure of the correspondence.
This closure fails to be a map; one polynomial may correspond to several laminations.

Let us now discuss details of the above outlined plan focusing on the case of quadratic polynomials.
A difficulty here is that geodesic laminations associated to laminational equivalence relations do not form a closed subspace.
In \cite{bopt16}, we resolve this issue by taking the closure $\ol{\L^q_2}$ of the space $\L^q_2$ of all $q$-laminations.
Clearly, $\ol{\L^q_2}$ includes all $q$-laminations as well as limits of non-constant sequences of $q$-laminations.

\subsection*{Limit laminations}
We proceed in \cite{bopt16} by following Thurston \cite{thu85} and defining \emph{minors $m(\lam)$} of geodesic laminations from $\ol{\L^q_2}$.
Two geodesic laminations $\lam_1$, $\lam_2\in \ol{\L^q_2}$ are identified if $m(\lam_1)\cap m(\lam_2)\ne \0$.
This yields the associated quotient space of $\ol{\L^q_2}$ which is proven in \cite{bopt16} to be homeomorphic to $\M^c_2$.
However, we want to argue that there is another similar quotient space of a certain subspace of $\ol{\L^q_2}$ that is, from the point of view of geodesic laminations, in some sense even more natural than $\M^c_2$ (viewed as a quotient space of $\ol{\L^q_2}$).
Indeed, the space $\ol{\L^q_2}$ may contain isolated geodesic laminations.
As we visualize the space $\ol{\L^q_2}$, isolated laminations can be placed anywhere in the picture.
In other words, their location is a matter of convenience and does not reflect the true structure of the space.
Therefore, it is of interest to consider the space of all non-isolated geodesic laminations from $\ol{\L^q_2}$ with the same identification as above (two are identified if their minors are non-disjoint), and to describe the corresponding quotient space.

\emph{Limit laminations} are defined as non-isolated points in $\ol{\L^q_2}$.
It is proven in \cite{bopt16} that a $\si_2$-invariant limit lamination $\lam$ must contain a critical quadrilateral or a critical leaf.
However, $\lam$ having a critical quadrilateral or leaf is only a necessary condition.
In this paper, we give sufficient conditions and thus describe the entire space of limit laminations, which will be denoted by $\L^l_2$ (``l'' from ``limit'').
It turns out that the sufficient condition on $\lam$ is the existence of a $q$-lamination $\lam_\sim$ satisfying property (0) below and one of properties (1)--(3).

(0) The critical set of $\lam$ is included into the critical set of $\lam_\sim$.

(1) The lamination $\lam_\sim$ is dendritic with a critical set $X$; in this case,
$\lam$ is obtained from $\lam_\sim$ by inserting a critical leaf/quadrilateral $Q$ into $X$ and inserting pullbacks of $Q$ into pullbacks of $X$.
Such geodesic lamination is a limit lamination.
A \emph{critical leaf} is by definition a diameter of the unit circle.
A \emph{critical quadrilateral} is by definition the convex hull of two critical leaves.

(2) The lamination $\lam_\sim$ has a periodic Siegel gap, and $\lam=\lam_\sim$.

(3) There is a periodic Fatou gap $U$ of $\lam_\sim$ of period $n$ that maps forward two-to-one under $\si_2$.
It is well known \cite{thu85} that then there exists a unique non-degenerate periodic edge $M$ of $U$ of exact $\si_2$-period $n$.
The leaf $M$ is a \emph{major} of $\sim$ while $m=\si_2(M)$ is the \emph{minor} of $\sim$ (and of $\lam_\sim$).
There are two possible subcases.

(a) The leaves $M$, $\si_2(M)$, $\dots$, $\si_2^{n-1}(M)$ are pairwise disjoint.
The critical set of $\lam$ is either a critical leaf sharing an endpoint with $M$ or the critical quadrilateral having $M$ as an edge.
All these laminations $\lam$ can be described explicitly, and all of them are identified.

(b) The leaves $M$, $\si_2(M)$, $\dots$, $\si_2^{n-1}(M)$ are not pairwise disjoint.
The lamination $\lam$ has a critical diameter sharing an endpoint with $M$.

Thus, the minors of laminations $\lam$ of type b) are endpoints of $m=\si_2(M)$.
These laminations $\lam$ break down into two classes corresponding to the endpoints of $m$.
These are distinct classes because no limit geodesic lamination has a critical quadrilateral with edge $M$.

Laminations of type b) are exactly laminations that correspond to points of $\M^c_2$ at which the closures of two distinct hyperbolic domains in $\M^c_2$ meet. According to the description in b), corresponding leaves in $\qml$ must be erased and replaced by their endpoints.
Mimicking Thurston, we can call this process \emph{cleaning of $\qml$}.
This is the only modification of $\qml$ necessary to transform $\sim_\qml$ into a laminational equivalence relation parameterizing the above described
quotient space of the space of all $\si_2$-invariant limit laminations.

By the known properties of $\qml$, this cleaning produces a new geodesic lamination $\qml^l$ with no isolated leaves (and hence perfect).
The corresponding quotient space will be denoted by $\M^l_2$.
Informally, one can say that while the ``pinched disk model'' of $\M^c_2$ is obtained by pinching $\cdisk$ at various places prescribed by $\qml$, we can then obtain $\M^l_2$ by ``unpinching'' $\M^c_2$ at places prescribed by $\qml\sm\qml^l$.
Alternatively, one can say that $\M^l_2$ is obtained if certain parts of $\M^c_2$ are replaced by topological disks.
Two interior components of $\M^c_2$ are said to be \emph{associated} if they are connected by a chain of complementary components touching at boundary points.
The closures of maximal unions of pairwise associated components are to be replaced with topological disks.

An alternative description of the space $\L^l_2$ is that the latter
consists of limits of sequences of \emph{dendritic} geodesic
laminations. It is easy to see that now we do not have to insist that
the sequences be non-constant. Observe also that there are other,
alternative ways to describe the laminations from $\L^l_2$ by
describing their properties intrinsically and without referring to the
limits of geodesic laminations. Indeed, all laminations from $\L^l_2$
have no isolated majors \emph{unless} the critical set is just a
critical leaf (as is the case of a Siegel gap). This can be turned into
a characterization of $\L^l_2$.

After ``unpinching'', the Main Cardioid $\ca$ turns into a bigger gap denoted here by $\ca^l$.
In fact, we can think of ``unpinching'' as follows.
On the first step we erase all non-degenerate edges of the Main Cardioid itself.
On the second step we erase non-degenerate edges in the copies of the Main Cardioid that used to be attached
to the Main Cardioid. If we go on with this, we will in the end obtain $\ca^l$. Observe that some
degenerate edges of $\ca^l$ are obtained on finite steps in the process. These are endpoints of edge
erased after a finite number of steps
or Siegel points on the boundary of some copy of the Main Cardioid.

Evidently, the only remaining edges of $\ca^l$ that are not obtained
after a finite number of steps are infinitely-renormalizable and are
limits of sequences of non-degenerate edges of deeper and deeper copies
of $\ca$ associated with $\ca$. Some of these edges are non-degenerate,
others are degenerate. Let us characterize those minors of infinitely
renormalizable laminations that are non-degenerate (we provide here
only a sketch of the proof). To this end we need a well-known
description of canonical copies of $\M^c_2$ contained in $\M^c_2$
(sometimes these are informally called ``baby Mandelbrot sets'').

Indeed, these copies are obtained by first choosing specific periodic (of period,
say, $k$) infinite gaps $V$ that are $\si_2$-images of quadratic Fatou
gaps. The standard monotone collapse of edges of $V$ semiconjugates
$\si_2^k|_{\bd(V)}$ with $\si_2$ (see details in
Section~\ref{s:non-renorm}). Using this correspondence, we lift $\qml$
to $V$. It turns out that the lift in fact coincides with the
restriction of $\qml$ onto $V$. If we take the associated quotient
space of $V$, then we will obtain a copy of $\M^c_2$ inside $\M^c_2$.
In other words, one chooses a special gap $V\subset\cdisk$ and then
considers a ``pinched'' version of $V$ using $\qml$ transferred to $V$
and inducing the corresponding lamination in $V$. This is what makes
$\M^c_2$ a \emph{fractal}, i.e., a set that contains infinitely many
homeomorphic copies of itself.

The part of $\M^c_2$ important for us is the real line (i.e., one can
draw the real line and consider its segment contained in $\M^c_2$).
There are two equivalent ways of describing this part of $\M^c_2$.
First, observe that all minors that intersect the real line are
\emph{vertical} and vice versa so that the fact that a minor is
vertical is equivalent to it belonging to the real line in $\M^c_2$. To
define the second way of describing the real line inside $\M^c_2$ we
need the notion of the \emph{growing tree of $f_\sim$} \cite{bl02}. It
is defined as follows. In the topological Julia set $J_\sim$ we connect
the two points associated with angles $0$ and $1/2$ (the unique fixed
angle for $\si_2$ and its preimage). Denote this connecting arc $I$.
Then consider the union of all images of $I$ under $f_\sim$ by
$T_\infty$ and call it the \emph{growing tree of $f_\sim$}. It is easy
to see (\cite{bl02}) that the set $T_\infty$ is an interval if and only
if the minor of $\lam_\sim$ is vertical.

Now, if $\lam_\sim$ is infinitely renormalizable then $J_\sim$ is a
dendrite. Suppose that $\lam_\sim$ is associated with a real polynomial
(equivalently, the two majors of $\lam_\sim$ are vertical). Clearly,
these majors cannot coincide (and be critical leaves) as the only
vertical diameter is $\ol{\frac14\dfrac34}$ which does not correspond
to an infinitely renormalizable lamination. Thus, the critical set of
$\lam_\sim$ is a critical quadrilateral with vertical edges and the
minor of $\lam_\sim$ is non-degenerate. It follows, that if we choose a
canonical copy of $\M^c_2$ inside $\M^c_2$ and then choose similar
place inside this copy, the corresponding minor will be non-degenerate.
For brevity let us call minors of this kind \emph{renormalized real}.
Thus, for every renormalized real minor of an infinitely renormalizable
lamination $\lam_\sim$ is non-degenerate.

To prove the converse, consider $J_\sim$. If the critical set of
$\lam_\sim$ is a non-degenerate critical quadrilateral $Q$, it follows
that $f_\sim(c)$ is a cutpoint of $J_\sim$ (here $c$ is the critical
point of $f_\sim$). Hence by \cite{bl02} there exists a finite
$f_\sim$-invariant tree $T$ that contains $c$. If we consider
sufficiently high renormalization of $f_\sim$ it will have to be
defined on the interval because there are only finitely many
branchpoints in $T$. This implies that for this renormalization its
growing tree is an interval. By the above this implies that the
corresponding minor is real. To summarize, the minor of an infinitely
renormalizable lamination $\lam_\sim$ is non-degenerate if and only if
it is renormalized real.

The geodesic lamination $\qml^l$ is a visual counterpart of a certain
equivalence relation on the circle. As $\qml^l$ is perfect, there are
two main alternative equivalence relations, $\sim^l$ and $\sim^{ld}$,
that generate the same geodesic lamination $\qml^l$. Here $\sim^l$ is a
laminational equivalence relation (so that all its classes are finite),
and since $\qml^l$ is perfect, $\sim^l$ is well-defined. By
construction, $\qml^l$ has a variety of pairwise disjoint infinite
gaps. The interpretation of $\qml^l$ as the geodesic lamination
generated by a laminational equivalence relation is based on the
assumption that all classes are finite so that in the quotient,
infinite gaps of $\qml^l$ give rise to bounded complementary domains of
$\M^l_2$. As explained above, in a alternative dendritic version of
$\qml^l$ we declare \emph{every} gap of $\qml^l$ to be the convex hull
of an equivalence class. This results into a well-defined laminational
equivalence relation with possibly infinite classes $\sim^{ld}$. Hence,
one can define yet another quotient space $\uc/\sim^{ld}=\M^{ld}_2$, a
dendritic version of $\M^l_2$, by collapsing the closures of all
bounded complementary domains to $\M^l_2$.

This purely topological construction in fact defines a certain quotient space of $\L^q_2$ (or $\ol{\L^q_2}$, or $\L^l_2$).
Namely, recall that the \emph{perfect part} of a geodesic lamination $\lam$ is obtained  by taking the maximal perfect subset of $\lam$.
In particular, all isolated leaves of $\lam$ must be erased as we extract the perfect part of $\lam$.
Now, say that two geodesic laminations are \emph{countably equivalent} if they have the same perfect parts
(equivalently, if the symmetric difference between them is countable).

For instance, all laminations from the Main Cardioid are countably equivalent.
Their common perfect part is the unit circle.
Other countable laminations 
also have $\uc$ as their perfect part.
All countable laminations are countably equivalent. However countable equivalence relation defined above
is not closed. If we close it and consider the resulting class of equivalence of all countable laminations,
we will get the class coinciding with the boundary of $\ca^l$ intersected with
the unit circle (recall that when we talk about classes of equivalence we consider points of the unit circle).
Slightly abusing the language, let us call the just defined closed equivalence relation \emph{countable equivalence relation}
and say that two geodesic laminations are \emph{countably equivalent} if they belong
to the same class in the sense of countable equivalence relation.
Then $\ca^l$ is the convex hull of the class of countable equivalence relation corresponding to the Main Cardioid.
The quotient space of the unit disk/circle under the countable equivalence relation is
$\uc/\sim^{ld}=\M^{ld}_2$ defined above.

\begin{figure}[!htb]
    \centering
    \begin{minipage}{0.5\textwidth}
        \centering
        \includegraphics[height=0.2\textheight]{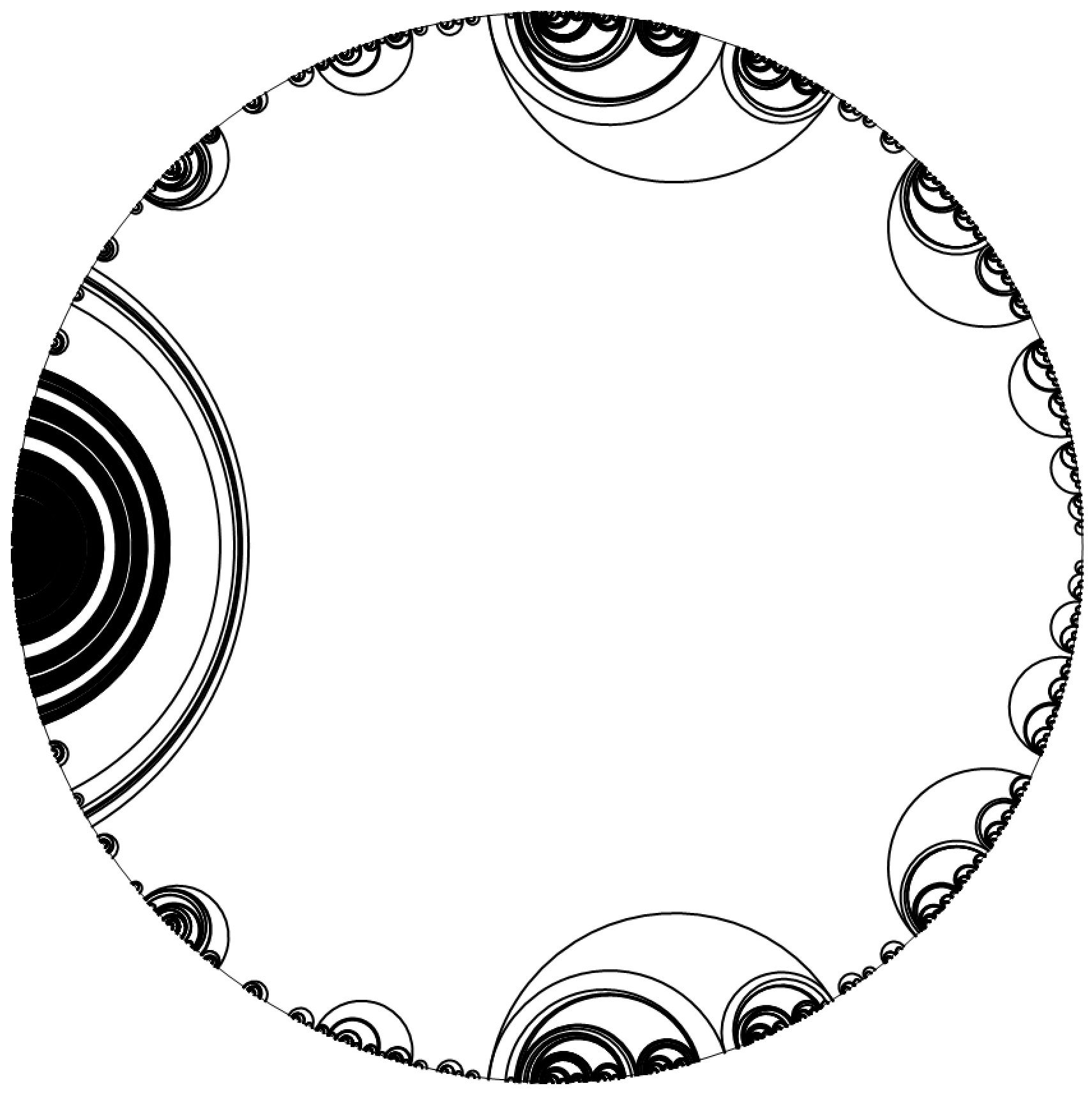}
        \caption{The lamination $\qml^l$}
        \label{fig:mset}
    \end{minipage}%
    \begin{minipage}{0.5\textwidth}
        \centering
        \includegraphics[height=0.2\textheight]{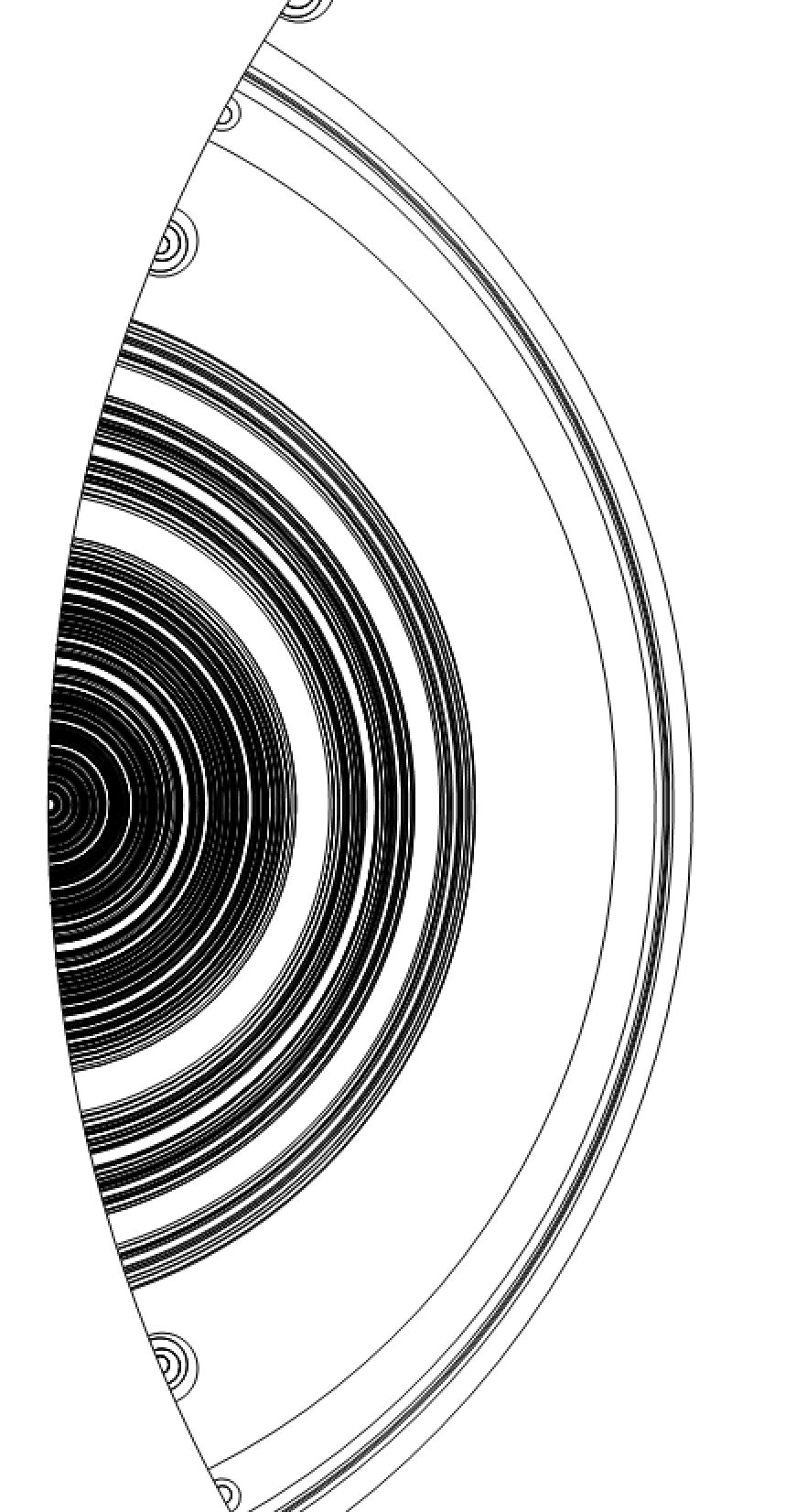}
        \caption{A zoom-in of $\qml^l$}
        \label{fig:qml^l}
    \end{minipage}
\end{figure}

\begin{figure}[!htb]
    \centering
        \includegraphics[height=0.2\textheight]{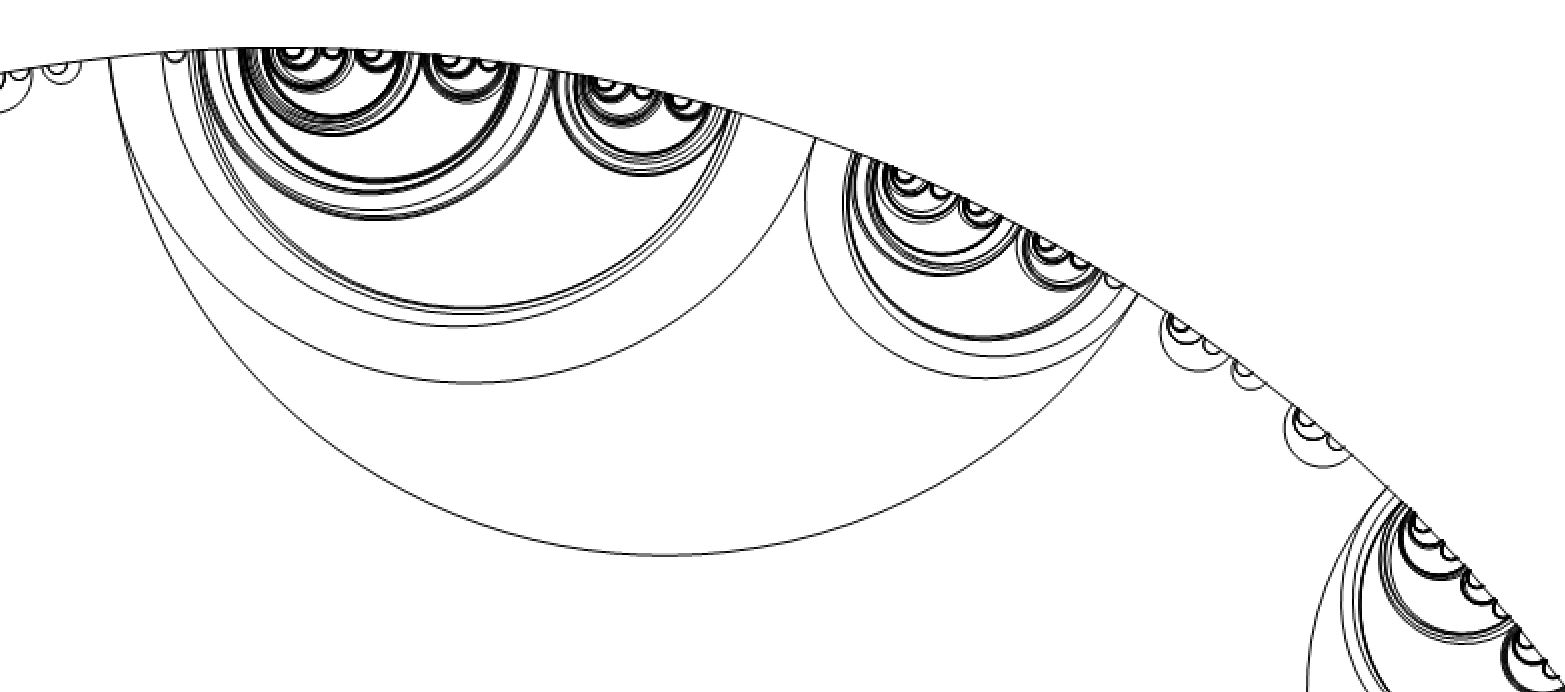}
        \caption{Another zoom-in of $\qml^l$}
        \label{fig:qml^l2}
\end{figure}

\subsection*{Parameter space of non-renormalizable laminations}
In the last section of the paper, Section~\ref{s:non-renorm}, we consider another way to modify $\M^c_2$.
The aim, as in the cases of $\M^l_2$ and $\M^{ld}_2$ above, is to uncover the structure of $\M^c_2$ by
replacing more complicated parts of $\M^c_2$ by their simplified ``unpinched'' versions.
Recall that ``unpinching'' works as follows.
For certain leaves $\ell$ of $\M^c_2$, we allow laminations from $\ol{\L^q_2}$ with critical leaves sharing endpoints with $\ell$ and not allow laminations with the critical quadrilateral supported by $\ell$.
The name of the game is which critical quadrilaterals are allowed and which are not.

We now want to remove from $\qml$ all non-degenerate leaves
corresponding to renormalizable laminations. Renormalizable
$q$-laminations can be described as laminations $\lam_\sim$ such that
$J_\sim$ admits \emph{non-trivial} periodic subcontinua (under the
action of $f_\sim$). Equivalently, $\lam_\sim$ is renormalizable if
$\lam^+_\sim$ has a \emph{non-trivial} $\si_2$-periodic closed
connected subset. Here ``non-trivial'' means non-degenerate and not
equal to the entire Julia set (or, in the second case, not projected to
a trivial continuum under the quotient map).

Now, suppose that $\lam_1$ is a renormalization of $\lam_2$. It means
the following. First, there exists a lamination $\hlam_2$ with an
$n$-periodic critical Fatou gap $U$. The lamination $\lam_2$ contains
$\hlam_2$ (i.e., $\hlam_2\subset \lam_2$) so that all the leaves of
$\lam_2\setminus \hlam_2$ are contained in gaps of $\hlam_2$ that
belong to the grand orbit of $U$. Now, restrict $\lam_2$ onto $U$ and,
similar to the above, collapse all edges of $U$ to points. This
semiconjugates $\si_2^n|_{\bd(U)}$ and $\si_2$ (intuitively, one can say that
this ``magnifies $U$ to the unit circle) and transforms $\lam_2|_U$ to
$\lam_1$. In this setting $\lam_2$ is said to be a \emph{tuning} of
$\hlam_2$.

In what follows it is easier for us to work with tunings of geodesic
laminations rather than with their renormalizations (even though these
two approaches are clearly equivalent). Thus, given a geodesic
lamination $\lam_1$ we may be able to find a lamination $\lam_2$
with $\lam_1\supset \lam_2$ such
that $\lam_1$ is a tuning of $\lam_2$. Now, $\lam_2$ can itself be a
tuning of a lamination $\lam_3\subset \lam_2$, etc. If we go on
with this, on the $n$-th step we will obtain a sequence of geodesic
laminations $\lam_1\supset \lam_2\supset \lam_3\supset \dots\supset
\lam_{n+1}$ so that $\lam_i$ is a tuning of $\lam_{i+1}\subset \lam_i$ as long as $i\le
n$. In the obvious sense $\lam_2, \lam_3, \dots, \lam_{n+1}$ are \emph{ancestors} of $\lam_1$
while $\lam_1$ is an \emph{offspring} of $\lam_2, \dots, \lam_{n+1}$. Since
we want to ``unpinch'' all minors associated with renormalizable
laminations, we need to move along this ``genealogical tree'' as far
back as possible in order to find the \emph{oldest ancestor} of a given
geodesic lamination $\lam=\lam_1$.

There is an important detail that should not be overlooked here.
Namely, every geodesic lamination is a tuning of the \emph{empty}
lamination (the one without non-degenerate leaves). This trivializes
the issue of looking for the oldest ancestor as the empty lamination is
the oldest ancestor for all laminations. Thus, we need to decide in
what cases the empty lamination can(not) be considered as the oldest
ancestor of a lamination. To this end, consider a hyperbolic lamination
$\lam$ with critical Fatou gap $U$ such that $\lam$ is \emph{not} a
tuning of any non-empty lamination.
(A hyperbolic lamination is a lamination corresponding to a hyperbolic polynomial).
Making one step back from $\lam$ to
the empty lamination should not be allowed if, as a result, a
substantial piece of information is lost. Thus, if $\lam$ is
uncountable, we consider $\lam$ as the \emph{oldest ancestor lamination}
and call $\lam$ \emph{non-trivial}. However, if $\lam$ is countable
(recall that $\lam$ is assumed to not be a tuning of any non-empty
lamination so that its countability implies that $\lam$ belongs to the
Main Cardioid) we consider the empty lamination as the corresponding
\emph{oldest ancestor lamination}. This defines the family of all
oldest ancestor laminations. The empty
lamination is called the \emph{trivial} oldest ancestor lamination.

Consider now a new family of laminations. We keep the
non-re\-nor\-ma\-li\-zable laminations in it. Otherwise for any
non-trivial oldest ancestor lamination with critical Fatou gap $U$ we
consider all possible laminations with critical leaves in $U$ or
critical quadrilaterals based upon edges of $U$. We will still
characterize (``tag'') our laminations with their minors. Thus,
postcritical gaps $V$ of non-trivial oldest ancestor laminations
described above and pinched under the equivalence relation $\sim_\qml$
in the process of creation of $\M^c_2$ are now completely
``unpinched''. Thus, gaps $V$ must be present in the parametric
lamination $\qml^{nr}$ that we are now constructing. Notice that the
resulting lamination $\qml^{nr}$ does not have any isolated leaves. As usual,
denote by $\sim^{nr}$ the laminational equivalence relation generating $\qml^{nr}$.

Now consider all \emph{countable} laminations $\lam$ that are not tunings of
non-empty laminations. Recall that we associate with them the empty
lamination as their oldest ancestor. As above, we insert all possible
critical leaves into them and all possible critical
quadrilaterals into them, with one exception. Namely, if $U$ is a
critical gap of $\lam$ of period $n$ then we do \emph{not} insert a
critical quadrilateral in $U$ supported by the edge of $U$ of period $n$.
This corresponds to the fact that all countable laminations $\lam$ that
are not tunings of non-empty laminations have the same empty oldest
ancestor and therefore should be associated with the same gap
$\ca^{nr}$ in the parameter space.

In $\qml^{nr}$ the gap $\ca^{nr}$
replaces the Main Cardioid. It plays a special role in $\qml^{nr}$. 
To describe its
edges and vertices, take all edges and vertices of all postcritical
Fatou gaps of laminations from the Main Cardioid --- except
for the edges of the same period as the Fatou gap in question (i.e.,
except for the edges that are edges of the Main Cardioid itself). Add
to this list all vertices of the Main Cardioid associated with Siegel
laminations. This collection of edges and vertices forms the boundary
of $\ca^{nr}$. Basically this means that to create $\ca^{nr}$ we erase
all minors of $\qml$ contained in postcritical Fatou gaps associated
with laminations from the Main Cardioid, and also all edges of the Main
Cardioid.

As with limit laminations and their space, alternatively to $\sim^{nr}$
in the situation of Section~\ref{s:non-renorm} we can also identify all
points of $\uc$ on the boundaries of infinite gaps of $\qml^{nr}$. This
creates a laminational equivalence relation with possibly infinite
classes $\sim^{nrd}$ called the \emph{dendritic version} of
$\sim^{nrd}$. In terms of quotient spaces this will lead to the
collapse of all bounded complementary domains of $\M^{nr}_2$ to points
yielding a new, dendritic, parameter space denoted by $\M^{nrd}_2$.

\section{Preliminaries}\label{s:prelim}

A part of this section is devoted to geodesic laminations, a major tool in studying the dynamics
of individual complex polynomials as well as in modeling certain families of complex polynomials.
Let $a$, $b\in \uc$.
By $[a, b]$, $(a, b)$, etc., we mean the closed, open, etc., \emph{positively oriented} circle arcs from $a$ to $b$, and
by $|I|$ the normalized length of an arc $I$ in $\uc$  (a normalization is made so that the length of $\uc$ is $1$).

\subsection{Laminational equivalence relations}\label{ss:lam}

Denote by $\hc$ the Riemann sphere.
For a compactum $X\subset\C$, let $\iU(X)$ be the component of $\hc\sm X$ containing infinity.
If $X$ is connected, there exists a Riemann mapping $\Psi_X:\hc\sm\ol\bbd\to \iU(X)$;
we always normalize it so that $\Psi_X(\infty)=\infty$, and $\Psi'_X(z)$ tends to a positive real limit as $z\to\infty$.

Consider a monic polynomial $P$ of degree $d\ge 2$, i.e., a polynomial of the form $P(z)=z^d+$ lower order terms.
Consider the Julia set $J_P$ of $P$ and the filled-in Julia set $K_P$ of $P$.
Extend the map $z\mapsto z^d$ to a map $\ta_d$ on $\hc$.
If $J_P$ is connected, then $\Psi_{J_P}=\Psi:\hc\sm\ol\bbd\to \iU(K_P)$ is such that $\Psi\circ
\ta_d=P\circ \Psi$ on the complement of the closed unit disk \cite{hubbdoua85, mil00}.
If $J_P$ is locally connected, then $\Psi$ extends to a continuous function
$$
\ol{\Psi}: {\hc\sm\bbd}\to
\ol{\hc\setminus K_P},
$$
and $\ol{\Psi} \circ\,\ta_d=P\circ\ol{\Psi}$ on the complement of the open unit disk.
Thus, we obtain a continuous surjection $\ol\Psi\colon\bd(\bbd)\to J_P$ (the \emph{Carath\'eodory loop}).
Throughout the paper, $\bd(X)$ denotes the boundary of a subset $X$ of a topological space.
Identify $\uc=\bd(\bbd)$ with $\mathbb{R}/\mathbb{Z}$.
Set $\psi=\ol{\Psi}|_{\uc}$.
We will write $\si_d$ for the restriction of $\ta_d$ to $\uc$.

Define an equivalence relation $\sim_P$ on $\uc$ by $x \sim_P y$ if and
only if $\psi(x)=\psi(y)$, and call it the ($\si_d$-invariant) {\em
laminational equivalence relation of $P$}; since $\Psi$ defined above
conjugates $\ta_d$ and $P$, the map $\psi$ semiconjugates $\si_d$ and
$P|_{J(P)}$, which implies that $\sim_P$ is invariant. Equivalence
classes of $\sim_P$ have pairwise disjoint convex hulls. The
\emph{topological Julia set} $\uc/\sim_P=J_{\sim_P}$ is homeomorphic to
$J_P$, and the \emph{topological polynomial} $f_{\sim_P}:J_{\sim_P}\to
J_{\sim_P}$, induced by $\si_d$, is topologically conjugate to
$P|_{J_P}$.


An equivalence relation $\sim$ on the unit circle, with similar
properties to those of $\sim_P$ above,  can be introduced abstractly
without any reference to a complex polynomial.

\begin{dfn}[Laminational equivalence relations]\label{d:lam}

An equivalence relation $\sim$ on the unit circle $\uc$ is said to be
\emph{laminational} if:

\noindent (E1) the graph of $\sim$ is a closed subset in $\uc \times
\uc$;

\noindent (E2) convex hulls of distinct equivalence classes are
disjoint;


\noindent (E3) each equivalence class of $\sim$ is finite. 
\end{dfn}

For a closed set $A\subset \uc$ we denote its convex hull by
$\ch(A)$. Then by an \emph{edge} of $\ch(A)$ we mean a closed
segment $I$ of the straight line connecting two points of the
unit circle such that $I$ is contained in the boundary $\bd(\ch(A))$
of $\ch(A)$. By an \emph{edge} of a $\sim$-class we mean an edge of
the convex hull of that class.

\begin{dfn}[Laminations and dynamics]\label{d:si-inv-lam}
A laminational equivalence relation $\sim$ is ($\si_d$-){\em
in\-va\-riant} if:

\noindent (D1) $\sim$ is {\em forward invariant}: for a class
$\mathbf{g}$, the set $\si_d(\mathbf{g})$ is a class too;

\noindent (D2) for any $\sim$-class $\mathbf{g}$, the map
$\si_d:\mathbf{g}\to\si_d(\mathbf{g})$ extends to $\uc$ as an
orientation preserving covering map such that $\mathbf{g}$ is the full
preimage of $\si_d(\mathbf{g})$ under this covering map.

\end{dfn}

Again, if this does not cause ambiguity, we will simply talk about
\emph{invariant} laminational equivalence relations.

Definition~\ref{d:si-inv-lam} (D2) has an equivalent version. Given a
closed set $Q\subset \uc$, a (positively oriented) {\em hole} $(a, b)$
of $Q$ (or of $\ch(Q)$) is a component of $\uc\sm Q$. Then (D2) is
equivalent to the fact that for a $\sim$-class $\mathbf{g}$ either
$\si_d(\mathbf{g})$ is a point or for each positively oriented hole
$(a, b)$ of $\mathbf{g}$ the positively oriented arc $(\si_d(a),
\si_d(b))$ is a hole of $\si_d(\mathbf{g})$. From now on, we assume
that, unless stated otherwise, $\sim$ is a $\si_d$-invariant
laminational equivalence relation.

Given $\sim$, consider the \emph{topological Julia set}
$\uc/\sim=J_\sim$ and the \emph{topological polynomial}
$f_\sim:J_\sim\to J_\sim$ induced by $\si_d$. Since $\uc\subset \C$, we
can use Moore's Theorem to embed $J_\sim$ into $\C$ and then to extend
the quotient map $\psi_\sim:\uc\to J_\sim$ to a map $\psi_\sim:\C\to
\C$ with the only non-trivial fibers being the convex hulls of
non-degenerate $\sim$-classes. A \emph{Fatou domain} of $J_\sim$ (or of
$f_\sim$) is a bounded component of $\C\sm J_\sim$. If $U$ is a
periodic Fatou domain of $f_\sim$ of period $n$, then
$f^n_\sim|_{\bd(U)}$ is either conjugate to an irrational rotation of
$\uc$ or to $\si_k$ with some $1<k$, cf. \cite{bl02}. In the case of
irrational rotation, $U$ is called a \emph{Siegel domain}.
The complement of the unbounded component of $\C\sm J_\sim$ is called
the \emph{filled-in topological Julia set} and is denoted by $K_\sim$.
Equivalently, $K_\sim$ is the union of $J_\sim$ and its bounded Fatou
domains. If the laminational equivalence relation $\sim$ is fixed, we
may omit $\sim$ from the notation. By default, we consider $f_\sim$ as
a self-mapping of $J_\sim$. For a collection $\mathcal R$ of sets,
denote the union of all sets from $\mathcal R$ by $\mathcal R^+$.

\begin{dfn}[Leaves]\label{d:geola}
If $A$ is a $\sim$-class, call an edge $\ol{ab}$ of $\ch(A)$ a \emph{leaf of $\sim$}.
All points of $\uc$ are also called (\emph{degenerate}) leaves of $\sim$.
\end{dfn}

The family of all leaves of $\sim$ is closed (the limit of a sequence of leaves of $\sim$ is a leaf of $\sim$);
the union of all leaves of $\sim$ is a continuum.
For any subset $X\subset\disk$ with the property $X=\ch(X\cap\uc)$, we set $\si_d(X)=\ch(\si_d(X\cap\uc))$.
In particular, for any leaf $\ell$ of $\sim$, the set $\si_d(\ell)$ is a (possibly degenerate) leaf.

\subsection{Geodesic laminations}\label{ss:geol}
Assume that $\sim$ is a $\si_d$-invariant laminational equivalence relation.

\begin{dfn}\label{d:q-geolam}
The set $\lam_\sim$ of all leaves of $\sim$ is called the \emph{geodesic lamination generated by $\sim$}.
\end{dfn}

Thurston studied collections of chords in $\disk$ with similar properties to those of $\lam_\sim$.

\begin{dfn}[Geodesic laminations, \rm{cf.} \cite{thu85}]\label{d:geolam}
Two distinct chords in $\cdisk$ are said to be \emph{unlinked} if they
meet at most in a common endpoint; otherwise they are said to be
\emph{linked}, or to \emph{cross} each other.
A \emph{geodesic pre-la\-mi\-na\-tion} $\lam$ is a set of (possibly degenerate) chords
in $\ol{\disk}$ such that any two distinct chords from $\lam$ are unlinked.
A geodesic pre-lamination $\lam$ is called a \emph{geodesic lamination} if all points
of $\uc$ are elements of $\lam$, and $\lam^+$ is closed.
We will sometimes use the abbreviation \emph{geolamination} for a geodesic lamination.
Elements of $\lam$ are called \emph{leaves} of $\lam$. By a \emph{degenerate} leaf
(chord) we mean a singleton in $\uc$. The continuum $\lam^+\subset
\cdisk$ is called the \emph{solid} of $\lam$.
Let $\lam$ be a geodesic lamination. The closure in $\C$ of a non-empty
component of $\disk\sm \lam^+$ is called a \emph{gap} of $\lam$.
If $G$ is a gap or a leaf, call the set $G'=\uc\cap G$ the \emph{basis of $G$}.
A gap is said to be \emph{finite $($infinite, countable, uncountable$)$} if its basis is finite (infinite, countable,
uncountable).
Uncountable gaps are also called \emph{Fatou gaps}.
Points of $G'$ are called \emph{vertices} of $G$.
Geolaminations of the form $\lam_\sim$, where $\sim$ is a laminational equivalence relation, are called \emph{q-geolaminations} (``q'' from ``e\textbf{q}uivalence'').
\end{dfn}

Let us discuss geodesic laminations in the dynamical context.
A chord is called ($\si_d$-)\emph{critical} if its endpoints have the same image under $\si_d$.
If it does not cause ambiguity, we will simply talk about \emph{critical} chords.

It is essential to associate to each polynomial with connected Julia set a corresponding laminational equivalence relation (we have already done it in the case when the Julia sets is locally connected).
By Kiwi \cite{kiwi97}, this can also be done it $P$ is a polynomial with connected Julia set and such that all its periodic points are repelling (recall that such polynomials are said to be \emph{dendritic}).
Here is an important result proven by Kiwi in \cite{kiwi97}.

\begin{thm}
\label{t:kiwi-dendr} Suppose that a polynomial $P$ with connected Julia
set $J=J(P)$ has no Siegel or Cremer periodic points. Then there exist
a laminational equivalence relation $\sim_P$, the corresponding
topological polynomial $f_{\sim_P}:J_{\sim_P}\to J_{\sim_P}$ which
acts on its
topological Julia set, and a monotone semiconjugacy $\ph_P:J\to
J_{\sim_P}$. The semiconjugacy $\ph_P$ is one-to-one on all
$($pre$)$periodic points of $P$ belonging to $J$. If all periodic
points of $P$ are repelling, then $J_{\sim_P}$ is a dendrite.
\end{thm}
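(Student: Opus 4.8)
The plan is to reconstruct Kiwi's argument, whose backbone is the passage from \emph{external rays} to a laminational equivalence relation. Using the B\"ottcher/Riemann coordinate $\Psi:\hc\sm\ol\bbd\to\iU(K_P)$ that conjugates $\ta_d$ to $P$, I would define for each angle $\ta\in\uc$ the external ray $R_\ta=\Psi(\{re^{2\pi i\ta}:r>1\})$. The first ingredient is the classical landing theorem of Douady--Hubbard and Sullivan: every periodic ray $R_\ta$ lands at a repelling or parabolic periodic point, every preperiodic ray lands as well, and, under the hypothesis that $P$ has no Siegel or Cremer cycles, every repelling or parabolic periodic point is the landing point of at least one, and only finitely many, periodic rays, which are permuted cyclically by $P$.

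Next I would define the \emph{rational lamination} on $\mathbb{Q}/\Z\subset\uc$ by declaring $\ta\sim\ta'$ iff $R_\ta$ and $R_{\ta'}$ land at a common point, and check that it is $\si_d$-invariant with finite classes. The heart of the proof, and the step I expect to be the main obstacle, is to show that the smallest closed equivalence relation $\sim_P$ containing the rational lamination is \emph{laminational} in the sense of Definition~\ref{d:lam}: its graph is closed (E1), convex hulls of distinct classes are disjoint (E2), and---crucially---every class is finite (E3). This is precisely where the no-Siegel/no-Cremer hypothesis does the work, since irrationally neutral cycles are exactly what allow infinitely many rays to accumulate in an uncontrolled manner, which would force infinite classes or violate disjointness; ruling them out, one controls the impressions via the separation of the plane by pairs of rays landing at a common (pre)periodic point. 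The invariance conditions (D1)--(D2) of Definition~\ref{d:si-inv-lam} are then inherited from $\si_d$-invariance of the rational lamination through a limiting argument.

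Granting that $\sim_P$ is an invariant laminational equivalence relation, the topological Julia set $J_{\sim_P}=\uc/\sim_P$ and the induced topological polynomial $f_{\sim_P}$ are defined exactly as for a general invariant laminational equivalence relation. The monotone semiconjugacy $\ph_P\colon J\to J_{\sim_P}$ is obtained by collapsing \emph{fibers}: the fiber of $z\in J$ is the set of points not separated from $z$ by any pair of (pre)periodic rays landing together. I would verify that fibers are closed, connected, and pairwise disjoint, that the quotient $J$-modulo-fibers is homeomorphic to $J_{\sim_P}$, and that the collapse conjugates $P|_J$ to $f_{\sim_P}$; monotonicity of $\ph_P$ is exactly the connectedness of fibers. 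For the injectivity assertion, a (pre)periodic point is a landing point of (pre)periodic rays and is separated from any other such point by a ray pair, so its fiber is a singleton; hence $\ph_P$ is one-to-one on all (pre)periodic points of $J$.

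Finally, if all periodic points of $P$ are repelling, then $P$ has no attracting or parabolic cycles and hence no bounded Fatou components, so $K_P=J$ has empty interior. Correspondingly, the rational lamination has no infinite (Fatou) gaps, so $\lam_{\sim_P}$ has no Fatou gaps and $J_{\sim_P}=\uc/\sim_P$ has no Fatou domains. A locally connected continuum that is a quotient of $\uc$ and has no complementary disks is a dendrite, which yields the last assertion.
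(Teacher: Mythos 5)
The paper does not actually prove this statement: it is quoted as Kiwi's theorem and attributed to \cite{kiwi97}, so there is no internal proof to compare yours against. Judged on its own terms, your outline does follow the architecture of Kiwi's argument --- landing of (pre)periodic rays, the rational lamination, its closed extension to all angles, and a fiber construction producing the monotone semiconjugacy --- so the route is the right one.

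As a proof, however, it is a road map whose gaps sit exactly where the content of \cite{kiwi97} lies. First, closedness (E1) of the generated relation is cheap, but finiteness of classes (E3) and disjointness of convex hulls (E2) for the \emph{closure} of the rational lamination are the hard theorems; saying that the no-Siegel/no-Cremer hypothesis ``does the work'' names the hypothesis but not the mechanism (Kiwi controls impressions of irrational angles via separation by rational ray pairs, bounds on the number of rays landing at a periodic point, and non-trivial input on wandering polygons --- none of which is sketched here). Second, the fiber construction requires proving that fibers are closed, connected, and form an upper semicontinuous decomposition whose quotient is homeomorphic to $\uc/\sim_P$ with the induced dynamics; this is not automatic and is where monotonicity and the injectivity on (pre)periodic points are actually established. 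Third, the final dendrite step has a genuine logical gap: the absence of bounded Fatou components of $P$ does not by itself preclude infinite gaps of $\lam_{\sim_P}$, since $\ph_P$ is only a semiconjugacy; one must separately rule out periodic Siegel, caterpillar, and degree-$k>1$ Fatou gaps (and wandering infinite gaps) of the lamination when all cycles of $P$ are repelling, for instance by showing that any such gap would force a non-repelling or irrationally neutral cycle. So the proposal correctly identifies Kiwi's strategy and honestly flags the obstacles, but it does not constitute a proof of the statement.
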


In what follows, denote by $\dc$ the space of all polynomials with
connected Julia sets and only repelling periodic points. Let $\dc_d$ be
the space of all such polynomials of degree $d$.
Then Theorem~\ref{t:kiwi-dendr} implies that the following diagram is commutative.

\[\label{eq:commdiag}
\dgARROWLENGTH=5em
\dgARROWPARTS=8
\begin{diagram}
\node{J(P)} \arrow{e,t}{P|_{J(P)}} \arrow{se,t}{\varphi_P}
    \node{J(P)} \arrow{se,b,1}{\varphi_P}
    \node{\mathbb S^1} \arrow{e,t}{\sigma_d} \arrow{sw,b,1}{\psi_{\sim_P}}
    \node{\mathbb S^1} \arrow{sw,t}{\psi_{\sim_P}}\\
\node[2]{J_{\sim_P}} \arrow{e,b}{f_{\sim_P}|_{J_{\sim_P}}} \node{J_{\sim_P}}
\end{diagram}
\]

The notion of \emph{sibling invariant laminations} introduced below is
slightly different from the original notion of \emph{invariant
laminations} in the sense of Thurston. However, sibling invariant
laminations form a closed set and include all $q$-laminations. Thus,
for all our purposes, it will suffice to consider sibling invariant
laminations only.
Some advantage of working with sibling $\si_d$-invariant geodesic
laminations is that they are defined through properties of their
leaves; gaps are not involved in the definition. It was shown in \cite{bmov13}
that all sibling invariant laminations are also invariant in the sense of Thurston \cite{thu85}.
In particular for any gap $G$ of a sibling invariant $\lam$ the set
$\si_d(G)$ is either a point, or a leaf of $\lam$, or a gap of $\lam$.

\begin{dfn}\label{d:siblinv}
A geodesic lamination $\lam$ is \emph{sibling $\si_d$-invariant} provided that:
\begin{enumerate}
\item for each $\ell\in\lam$, we have $\si_d(\ell)\in\lam$,
\item \label{2}for each $\ell\in\lam$ there exists $\ell'\in\lam$
    so that $\si_d(\ell')=\ell$.
\item \label{3} for each $\ell\in\lam$ so that $\si_d(\ell)=\ell'$
    is a non-degenerate leaf, there exist d {\bf disjoint} leaves
    $\ell_1,\dots,\ell_d$ in $\lam$ so that $\ell=\ell_1$ and
    $\si_d(\ell_i)=\ell'$ for all $i=1,\dots,d$.
\end{enumerate}
\end{dfn}

Let us list a few properties of sibling $\si_d$-invariant geodesic
laminations.

\begin{thm}[\cite{bmov13}]\label{t:siblinv}
The space of all sibling $\si_d$-invariant geodesic laminations is compact.
All geodesic laminations generated by $\si_d$-invariant laminational equivalence relations are sibling $\si_d$-invariant.
\end{thm}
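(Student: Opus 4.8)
The plan is to treat the two assertions separately, beginning with the (easier) statement that every $q$-geolamination $\lam_\sim$ is sibling $\si_d$-invariant, and then establishing compactness by showing that sibling invariance is preserved under Hausdorff limits. For the generation statement I would fix a $\si_d$-invariant laminational equivalence relation $\sim$ and a leaf $\ell=\ol{ab}$ of $\lam_\sim$, an edge of $\ch(\mathbf{g})$ for some class $\mathbf{g}$. Condition~(1) of Definition~\ref{d:siblinv} is immediate from forward invariance (D1) together with the hole description in Definition~\ref{d:si-inv-lam}(D2): $\si_d(\mathbf{g})$ is again a class and $\si_d(\ell)$ is one of its edges (or a point). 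For condition~(2), I would pick a preimage $a'$ of $a$ under $\si_d$ and let $\mathbf{h}$ be its $\sim$-class; by (D1), $\si_d(\mathbf{h})=\mathbf{g}$, and the covering-map description in (D2) shows that the hole of $\mathbf{g}$ cut off by $\ell$ is the $\si_d$-image of some hole of $\mathbf{h}$, whose corresponding edge $\ell'$ of $\ch(\mathbf{h})$ satisfies $\si_d(\ell')=\ell$. For condition~(3), suppose $\si_d(\ell)=\ell'$ is non-degenerate and set $\mathbf{g}'=\si_d(\mathbf{g})$, so that $\ell'$ is an edge of $\ch(\mathbf{g}')$. By (D2), $\mathbf{g}$ is the full $\si_d$-preimage of $\mathbf{g}'$ under an orientation-preserving covering of $\uc$; hence the hole of $\mathbf{g}'$ cut off by $\ell'$ has exactly $d$ preimage holes of $\mathbf{g}$, each mapped homeomorphically onto it, giving $d$ edges of $\ch(\mathbf{g})$ that map onto $\ell'$. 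These $d$ edges are pairwise disjoint (their endpoints lie in distinct sheets of the covering) and one of them is $\ell$, which is exactly the required family.

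For compactness, I would recall that a geodesic lamination may be viewed as a closed subset of the compact chord space $E$ that contains all degenerate leaves and whose leaves are pairwise unlinked. The hyperspace of closed subsets of $E$ is compact in the Hausdorff metric, and both ``contains $\uc$'' and ``pairwise unlinked'' are closed conditions (a transversal crossing in the limit would force one for large $n$); hence the space of all geodesic laminations is compact. It therefore suffices to show that the set of sibling $\si_d$-invariant geolaminations is \emph{closed}: let $\lam_n\to\lam$ with each $\lam_n$ sibling invariant, and verify (1)--(3) for $\lam$. Fixing $\ell\in\lam$ and choosing $\ell_n\in\lam_n$ with $\ell_n\to\ell$, condition~(1) follows because $\si_d(\ell_n)\in\lam_n$, the map $\ell\mapsto\si_d(\ell)$ is continuous on chords, and $\si_d(\ell_n)\to\si_d(\ell)$, so $\si_d(\ell)\in\lam$. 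For condition~(2), I would choose $\ell'_n\in\lam_n$ with $\si_d(\ell'_n)=\ell_n$; by compactness of $E$ a subsequence converges to some $\ell'\in\lam$, and continuity gives $\si_d(\ell')=\ell$.

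The main obstacle is condition~(3), where the \emph{disjointness} (not merely unlinkedness) of sibling leaves must survive the limit. Assume $\si_d(\ell)=\ell'$ is non-degenerate; then $\si_d(\ell_n)\to\ell'$ is non-degenerate for large $n$, so Definition~\ref{d:siblinv}(3) applies to $\ell_n$, yielding pairwise disjoint $\ell_{1,n}=\ell_n,\dots,\ell_{d,n}\in\lam_n$ with $\si_d(\ell_{i,n})=\si_d(\ell_n)$. Passing to a subsequence, $\ell_{i,n}\to\ell_i\in\lam$ with $\si_d(\ell_i)=\ell'$; in particular each $\ell_i$ is non-degenerate and $\ell_1=\ell$. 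The crux is that these limits remain pairwise disjoint: if $\ell_i$ and $\ell_j$ shared an endpoint $a$ for some $i\ne j$, then the endpoints $a_{i,n}$ of $\ell_{i,n}$ and $a_{j,n}$ of $\ell_{j,n}$ mapping to the common endpoint of $\si_d(\ell_n)$ would both lie, for large $n$, in an arc of length $<1/d$ around $a$ on which $\si_d$ is injective; since $\si_d(a_{i,n})=\si_d(a_{j,n})$, this forces $a_{i,n}=a_{j,n}$, contradicting the disjointness of $\ell_{i,n}$ and $\ell_{j,n}$. Thus $\ell_1,\dots,\ell_d$ are $d$ pairwise disjoint leaves of $\lam$ mapping onto $\ell'$ with $\ell_1=\ell$, establishing condition~(3). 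This shows sibling invariance is a closed condition, and hence the space is compact. The essential point throughout is that $\si_d$ is a local homeomorphism, so the $d$ sheets separating sibling leaves cannot collapse in the limit as long as the common image $\ell'$ stays non-degenerate.
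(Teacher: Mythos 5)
The paper itself gives no proof of this theorem --- it is imported verbatim from [BMOV13] --- so there is no internal argument to measure yours against; I will judge the proposal on its own. Your compactness half is essentially correct: the space of geodesic laminations is a closed subspace of the compact hyperspace of closed subsets of $E$ (unlinkedness and containment of $\uc$ are closed conditions), properties (1) and (2) of Definition~\ref{d:siblinv} pass to Hausdorff limits by continuity and a subsequence extraction, and your local-injectivity argument showing that the $d$ pairwise disjoint siblings cannot merge in the limit while the common image stays non-degenerate is precisely the key point.

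The gap is in your verification of condition (3) for a $q$-geolamination. You assert that, by (D2), the hole of $\mathbf{g}'=\si_d(\mathbf{g})$ cut off by $\ell'$ has exactly $d$ preimage holes of $\mathbf{g}$, so that all $d$ siblings are edges of $\ch(\mathbf{g})$. That is false in general: (D2) only says $\si_d|_{\mathbf{g}}$ extends to an orientation-preserving covering of \emph{some} degree $k\le d$ under which $\mathbf{g}$ is the full preimage of $\mathbf{g}'$; nothing forces $k=d$. For the basilica ($d=2$) the class $\mathbf{g}=\{1/3,2/3\}$ maps onto itself by a degree-one covering, so $\ch(\mathbf{g})$ contributes only one edge over $\ell'=\ol{(1/3)(2/3)}$; the second sibling $\ol{(1/6)(5/6)}$ is an edge of the \emph{different} class $\{1/6,5/6\}$. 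The correct argument must consider all classes $\mathbf{h}_1,\dots,\mathbf{h}_m$ contained in $\si_d^{-1}(\mathbf{g}')$: each maps onto $\mathbf{g}'$ with some covering degree $k_i$ and contributes $k_i$ pairwise disjoint edges over $\ell'$ (distinct preimage holes of a non-degenerate hole cannot share endpoints), the degrees satisfy $\sum_i k_i=d$ because every point of $\mathbf{g}'$ has exactly $d$ $\si_d$-preimages distributed among the $\mathbf{h}_i$ according to the $k_i$, and edges of distinct classes are disjoint by (E2). With that repair the first half goes through; as written, the step claiming $d$ preimage holes of the single class $\mathbf{g}$ is simply wrong.
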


For brevity, in what follows instead of ``sibling $\si_d$-invariant geodesic laminations'' we will say ``$\si_d$-invariant geodesic laminations''.
Below, we talk interchangeably about leaves (gaps) of $\sim$ or of $\lam_\sim$.
Let us now discuss gaps in the context of $\si_d$-invariant laminational equivalence relations and geodesic laminations.

\begin{dfn}[Critical gaps]\label{d:crit}
A gap\, $G$ of a geodesic lamination is called \emph{($\si_d$-)critical} if for each $y\in \si_d(G')$ the set
$\si_d^{-1}(y)\cap G'$ consists of at least $2$ points.
If it does not cause ambiguity, we talk about \emph{critical} gaps.
\end{dfn}

\begin{dfn}[Periodic and (pre)periodic gaps]\label{d:gaps-i}
Let $G$ be a gap of an invariant geodesic lamination $\lam$.
If the map $\si_d$ restricted to $G'$ extends to $\bd(G)$ as a composition of a monotone map and a covering map of some degree $m$, then $m$ is called the \emph{degree} of $\si_d|_G$.
A gap/leaf $U$ of $\lam_\sim$ is said
to be \emph{{\rm(}pre{\rm)}periodic} of period $k$ if
$\si_d^{m+k}(U')=\si_d^m(U')$ for some $m\ge 0$, $k>0$; if $m, k$ are
chosen to be minimal, then if $m>0$, $U$ is called \emph{preperiodic},
and, if $m=0$, then $U$ is called \emph{periodic $($of period $k)$}. If
the period of $G$ is $1$, then $G$ is said to be \emph{invariant}.
Define \emph{precritical} and \emph{{\rm(}pre{\rm)}critical} objects
similarly to (pre)periodic and preperiodic objects defined above.
\end{dfn}

Consider infinite periodic gaps of 
$\si_d$-invariant geodesic laminations.
There are three types of such gaps: \emph{caterpillar} gaps, \emph{Siegel} gaps, and \emph{Fatou gaps
of degree greater than one}.
Observe that, by \cite{kiw02}, infinite gaps are eventually mapped onto periodic infinite gaps.
First we state (without a proof) a well-known folklore lemma about the edges of preperiodic (in particular, infinite) gaps
(see, e.g., Lemma 2.28 \cite{bopt17}).

\begin{lem}\label{l:edges}
Any edge of a (pre)periodic gap is either (pre)periodic or (pre)critical.
\end{lem}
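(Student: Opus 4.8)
The plan is to first reduce to the case of a periodic gap and then run a length–expansion argument on the holes cut off by the edges. Since $G$ is (pre)periodic, there are $m\ge 0$ and $k>0$ with $H=\si_d^m(G)$ periodic of period $k$. Let $\ell$ be a non-degenerate edge of $G$. If some forward image $\si_d^j(\ell)$ is degenerate, choose $j$ minimal; then $\si_d^{j-1}(\ell)$ is a non-degenerate leaf whose image is a single point, i.e.\ a critical leaf, so $\ell$ is (pre)critical and we are done. Thus I may assume every $\si_d^i(\ell)$ is non-degenerate. By invariance in the sense of Thurston (for a sibling invariant lamination $\si_d$ carries $\bd(G)$ onto $\bd(\si_d(G))$ as a monotone map followed by a covering map, and $\si_d(G)$ is a point, leaf, or gap), the non-degenerate leaf $\si_d^m(\ell)$ is then an edge of the periodic gap $H$, and it suffices to show that $\si_d^m(\ell)$ is (pre)periodic.

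So I reduce to the situation where $G=H$ is periodic of period $k$ and $\ell$ is an edge all of whose forward images are non-degenerate. Consider the orbit $\ell_i=\si_d^{ik}(\ell)$ under the first-return map $\si_d^k$; each $\ell_i$ is a non-degenerate edge of $G$, and the open arcs (holes) $I_i\subset\uc$ cut off by the $\ell_i$ are pairwise disjoint, so $\sum_i |I_i|\le 1$. The key local computation is that $\si_d^k$ carries $I_i$ onto $I_{i+1}$, and whenever $|I_i|<d^{-k}$ the arc is short enough that $\si_d^k$ is injective on it and simply multiplies its length by $d^k$; hence $|I_{i+1}|=d^k\,|I_i|>|I_i|$ for every short hole.

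Now I split into two cases according to how often the orbit meets a \emph{long} edge, i.e.\ one with $|I_i|\ge d^{-k}$. Since the holes are disjoint and of total length at most $1$, there are at most $d^k$ long edges of $G$, hence only finitely many. If the orbit meets long edges infinitely often, the pigeonhole principle forces $\ell_{i_1}=\ell_{i_2}$ for some $i_1<i_2$, so $\ell_{i_1}$ is $\si_d^k$-periodic and $\ell$ is (pre)periodic. If the orbit meets long edges only finitely often, then past some index every hole is short, and iterating $|I_{i+1}|=d^k\,|I_i|$ gives $|I_i|\to\infty$, contradicting $|I_i|\le \tfrac12$. Hence the first case always occurs and $\ell$ is (pre)periodic. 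The one point demanding care — and the main obstacle — is justifying the local picture $\si_d^k(I_i)=I_{i+1}$ with clean length multiplication: one must check that a short hole maps \emph{onto a single} hole rather than across several, which uses that images of edges are edges together with the monotone-plus-covering structure of $\si_d^k$ on $\bd(G)$ recorded in the definition of the degree of $\si_d|_G$. The finite-gap case is the degenerate version of the same argument, since there only finitely many edges are available and the orbit of $\ell$ is automatically finite.
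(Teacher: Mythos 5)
Your argument is correct. The paper itself gives no proof of this lemma --- it is stated as a well-known folklore fact with a pointer to Lemma~2.28 of \cite{bopt17} --- and your expansion-plus-pigeonhole argument (reduce to a periodic gap, note that only finitely many edges can have holes of length at least $d^{-k}$, and show that short holes have their lengths multiplied by $d^{k}$ under the return map so that the orbit must revisit a long edge) is precisely the standard argument behind that folklore statement; you also correctly isolate and justify the one delicate point, namely that a short hole maps onto a single hole of the image gap with exact length multiplication.
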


Let us now classify infinite gaps.

\begin{dfn}\label{d:caterpillar} An infinite  gap $G$ is said
to be a \emph{caterpillar} gap if its basis $G'$ is countable.
\end{dfn}

As as an example, consider a periodic gap $Q$ such that:
\begin{itemize}
 \item  The boundary of $Q$ consists of a periodic leaf
     $\ell_0=\ol{xy}$ of period $k$, a critical leaf
     $\ell_{-1}=\ol{yz}$ concatenated to it, and a countable
     concatenation of leaves $\ell_{-n}$ accumulating at $x$ (the
     leaf $\ell_{-r-1}$ is concatenated to the leaf $\ell_{-r}$,
     for every $r=1$, 2, $\dots$).
\item We have $\si^k(x)=x$, $\si^k(\{y, z\})=\{y\}$, and $\si^k$
    maps each $\ell_{-r-1}$ to $\ell_{-r}$ (all leaves are shifted
    by one towards $\ell_0$ except for $\ell_0$, which maps to
    itself, and $\ell_{-1}$, which collapses to the point $y$).
\end{itemize}

The description of $\si_3$-invariant caterpillar gaps is in
\cite{bopt13}. In general, the fact that the basis $G'$ of a
caterpillar gap $G$ is countable implies that there are lots of
concatenated edges of $G$. Other properties of caterpillar gaps can
be found in Lemma~\ref{l:cater}.

\begin{lem}[Lemma 1.15 \cite{bopt16}]\label{l:cater}
Let $G$ be a caterpillar gap of period $k$.
Then the degree of $\si_d^k|_{\bd(G)}$ is one, and $G'$ contains some periodic points of period $k$.
\end{lem}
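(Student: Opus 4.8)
The plan is to analyze the monotone--covering decomposition of $\si_d^k|_{\bd(G)}$ furnished by the definition of the degree, and to exploit the hypothesis that the basis $G'$ is countable at two separate levels: first to pin down the degree, then to locate periodic vertices. Throughout I use that $G$ is periodic of period $k$, so $\si_d^k(G)=G$ and $\si_d^k$ carries $\bd(G)$ (a topological circle) to itself; that each non-degenerate edge of $G$ subtends a complementary arc of $\uc$ of positive length, so that, these arcs being pairwise disjoint, $G$ has at most countably many edges; and that $\bd(G)\cap\uc=G'$, since $G$ is closed. Write $\tau\colon\bd(G)\to \uc_G$ for the monotone map collapsing each edge of $G$ to a point, let $\bar g\colon \uc_G\to\uc_G$ be the induced map, and recall that by definition the degree $m$ of $\si_d^k|_{\bd(G)}$ is the degree of the covering $\bar g$.

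First I would establish that $m=1$. The set $\uc_G=\tau(\bd(G))$ is the union of the (countably many) points to which the edges collapse and of $\tau(G')$; since $G'$ is countable, $\uc_G$ is countable, and a countable quotient supports no covering map of degree $\ge 2$, so $m=1$. Equivalently, arguing by contraposition: if $m\ge 2$ then $\uc_G$ is a genuine circle and $\bar g$ is a covering of it of degree $\ge 2$; the collapse points form a countable subset $C\subset\uc_G$, and for every $\theta\in\uc_G\sm C$ the fiber $\tau^{-1}(\theta)$ is a single point of $\bd(G)$ lying on no edge, hence a point of $\bd(G)\cap\uc=G'$. As $\uc_G\sm C$ is uncountable this exhibits uncountably many vertices, contradicting the countability of $G'$. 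I expect this step---making the monotone--covering decomposition precise and ruling out degree $\ge 2$---to be the main obstacle, since it requires relating the combinatorial notion of degree to the measure-zero structure of a countable basis.

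Finally I would produce the periodic vertices. Having shown $m=1$, the map $h:=\si_d^k|_{\bd(G)}$ is an orientation-preserving monotone circle map of degree one and therefore has a well-defined rotation number $\rho$. Vertices map to vertices, so $G'$ is an $h$-invariant subset of $\bd(G)\cap\uc$; if $\rho$ were irrational, the closure of the orbit of any vertex would be either all of $\bd(G)\cap\uc$ or a Cantor set, in either case uncountable, contradicting the countability of $G'$. Hence $\rho$ is rational and $h$ has $\si_d^k$-fixed points in $G'$. To sharpen this to period exactly $k$, I would invoke Lemma~\ref{l:edges}: since $G$ is an infinite gap of period $k$, its edges are (pre)periodic or (pre)critical, and periodicity of $G$ together with $m=1$ forces a periodic edge $\ell_0$ of period $k$ (as in the caterpillar model, where $\ell_0=\ol{xy}$). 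The endpoints of $\ell_0$ are fixed by $\si_d^k$, and an orientation count shows that at least one of them has exact $\si_d$-period $k$, which is the required periodic point in $G'$.
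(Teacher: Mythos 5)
Your second half is essentially sound: ruling out an irrational rotation number by pushing the orbit closure of a vertex through the semiconjugacy to the rotation and obtaining an uncountable closed invariant subset of $G'$ is a clean variant of the paper's own argument for that case. The genuine gap is in the first half, where you declare that ``by definition'' the degree $m$ is the degree of the map $\bar g$ induced on the quotient $\uc_G$ obtained by collapsing the \emph{edges} of $G$. Definition~\ref{d:gaps-i} only says that $\si_d^k|_{\bd(G)}$ factors as a monotone map followed by a degree-$m$ covering; the monotone factor is not a priori the edge-collapsing map. In fact, for a caterpillar gap the edge-collapsed quotient is countable, hence not a circle, so the edge-collapsing map cannot be the monotone part of \emph{any} monotone--covering factorization; taken literally, your first argument (``a countable quotient supports no covering of degree $\ge 2$'') would just as well rule out degree $1$ and show the degree is undefined. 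Your ``equivalent'' contrapositive inherits the same problem: from $m\ge 2$ one gets a monotone semiconjugacy $\psi\colon\bd(G)\to\uc$ with $\si_m$, but the assertion that the fibres of $\psi$ over all but countably many points are single vertices is exactly the claim that $\psi$ collapses precisely the edges, and that is what must be proved, not assumed.

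The missing step --- the heart of the paper's proof --- is to show that $\psi$ does collapse every edge of $G$, and this is where Lemma~\ref{l:edges} is actually needed (you invoke it only at the very end, where it does little work). Every edge of $G$ is (pre)periodic or (pre)critical, so its forward $\si_d^k$-images eventually stabilize at a point or at a periodic edge; if some edge were not collapsed by $\psi$, its $\psi$-image would be a nondegenerate arc whose forward $\si_m$-images eventually cover all of $\uc$ (expansion of $\si_m$ for $m\ge 2$), which is incompatible with the images of that edge remaining inside a single eventually periodic edge. Only after this does one conclude that the uncountable ``basic set'' of $\psi$ lies in $G'$, contradicting countability. Finally, your closing claim that periodicity of $G$ together with $m=1$ ``forces a periodic edge of period $k$'' with an endpoint of exact period $k$ is asserted rather than proved; the paper's own (commented-out) proof also stops at the existence of periodic points once the irrational-rotation alternative is excluded, so this last imprecision is shared, but it is not a complete argument as written.
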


\begin{dfn}\label{d:siegel}
A periodic Fatou gap $G$ of period $n$ is said to be a periodic \emph{Siegel} gap if the degree of $\si_d^n|_G$ is $1$, and the basis $G'$ of $G$ is uncountable.
\end{dfn}

The next lemma is well known.
A part of it  was actually proven in the proof of  \cite[Lemma~\ref{l:cater}]{bopt16}.

\begin{lem}\label{l:siegel}
Let $G$ be a Siegel gap of period $n$. Then the map $\si_d^n|_{\bd(G)}$
is monotonically semiconjugate to an irrational circle rotation and
contains no periodic points. A periodic Siegel gap must have at least
one image that has a critical edge.
\end{lem}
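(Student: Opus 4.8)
The plan is to analyze the return map $g=\si_d^n|_{\bd(G)}$. Since $\si_d$ preserves cyclic order on $\uc$ and carries boundaries of gaps to boundaries of gaps in an orientation-preserving way, $g$ is a \emph{monotone} circle map (possibly collapsing some arcs), and by the definition of a Siegel gap (Definition~\ref{d:siegel}) it has degree one. First I would invoke the standard dichotomy for monotone degree-one circle maps, exactly as in the proof of Lemma~\ref{l:cater} (see \cite{ak79,blo84}): either $g$ is monotonically semiconjugate to an irrational rotation — in which case $g$ has no periodic points — or $g$ has periodic points. The first two assertions of the lemma amount to showing that, under the Siegel hypothesis, the second alternative cannot occur.

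To rule out periodic points I would argue by contradiction, showing that their presence forces the basis $G'$ to be countable, contrary to the definition of a Siegel gap. The key simplification is that $\si_d$ has only finitely many periodic points of any given period, so the set $F$ of periodic points of $g$ lying on $\bd(G)$ is \emph{finite}; after replacing $g$ by a suitable power we may assume $F=\mathrm{Fix}(g)$. The finitely many complementary arcs of $F$ in $\bd(G)$ are fixed-point free, so on each of them the monotone map $g$ pushes every orbit monotonically toward one endpoint. By Lemma~\ref{l:edges} every edge of $G$ is (pre)periodic or (pre)critical; an edge with an endpoint strictly inside such an arc cannot be periodic (its endpoint would be a periodic point inside the arc), hence it is preperiodic or precritical and maps forward into the \emph{countable} set consisting of the finitely many periodic edges (whose endpoints lie in the finite set $F$) together with the (pre)critical edges. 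From this one concludes that the endpoints of edges can accumulate only on a countable set, so $G'=\overline{\{\text{edge endpoints}\}}$ is countable — the desired contradiction. This countability step, which is precisely the direction converse to the one carried out in the proof of Lemma~\ref{l:cater}, is the main obstacle; everything else is bookkeeping. Consequently $g$ is monotonically semiconjugate to an irrational rotation and has no periodic points.

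Finally, for the last assertion I would use the first two. Since $\bd(G)$ carries no periodic points of $g$, the lamination has no periodic edge on $\bd(G)$, and therefore no preperiodic edge either (a preperiodic edge that never collapses would have to map onto a nondegenerate periodic edge); by Lemma~\ref{l:edges} \emph{every} edge of $G$ is thus (pre)critical. As $G$ is a Fatou gap, $G'$ is a proper uncountable subset of $\uc$, so $G$ has a nondegenerate edge $e$. Letting $m\ge 0$ be least with $\si_d^m(e)$ critical, the leaf $\si_d^m(e)$ is a nondegenerate critical edge of the gap $\si_d^m(G)=\si_d^{m\bmod n}(G)$, which is one of the images $G,\si_d(G),\dots,\si_d^{n-1}(G)$. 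Hence at least one image of $G$ has a critical edge, as claimed.
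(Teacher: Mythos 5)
The paper itself gives no proof of this lemma: it is stated as ``well known,'' with a pointer to the proof of Lemma~1.15 in \cite{bopt16}, so there is nothing to compare line by line. Your overall strategy is the standard one (the dichotomy for monotone degree-one circle maps, as in the commented-out proof of Lemma~\ref{l:cater}; ruling out periodic points by showing they would force $G'$ to be countable; deducing the critical-edge statement from Lemma~\ref{l:edges}), and your first and third paragraphs are essentially fine. But the step you yourself identify as ``the main obstacle'' is not actually carried out, and as written it is a non sequitur.

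Concretely: from ``every edge of $G$ is preperiodic or precritical'' you conclude that the edge endpoints ``can accumulate only on a countable set,'' hence that $G'=\overline{\{\text{edge endpoints}\}}$ is countable. The set of edges of \emph{any} gap is countable (edges correspond to the complementary arcs of the closed set $G'$), so the countability of the set of edge endpoints carries no information; the issue is whether its \emph{closure} is countable, and the closure of a countable set can perfectly well be uncountable (the endpoints of the complementary intervals of a Cantor set are dense in it). Worse, your deduction proves too much: in a genuine Siegel gap every edge \emph{is} precritical (that is the third assertion of the lemma), so the hypotheses of your countability step are satisfied, yet $G'$ is uncountable by definition. Hence the implication ``all edges (pre)periodic or (pre)critical $\Rightarrow G'$ countable'' is false. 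What is actually needed here is the expansion of $\si_d$: if $g=\si_d^{nm}|_{\bd(G)}$ had a fixed point set $F$, then on each complementary arc $I$ of $F$ all points drift monotonically toward one endpoint, and since $\si_d^{nm}$ expands arcs of $\uc$, the only way the induced boundary map can fail to expand is for $I$ to be exhausted by a concatenation of collapsing (critical and precritical) edges accumulating only at the endpoints of $I$ --- this is what forces $G'$ to be countable (and is exactly the mechanism visible in the caterpillar analysis inside the proof of Lemma~\ref{l:lam-uniq}). Without invoking expansion in some form, the contradiction cannot be reached.
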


The following definition completes our series of definitions.

\begin{dfn}\label{d:fatou}
A periodic \emph{Fatou gap is of degree $k>1$} if the degree of
$\si_d^n|_{\bd(G)}$ is $k>1$. If the degree of a Fatou gap $G$ is $2$,
then $G$ is said to be \emph{quadratic}.
\end{dfn}

The next lemma is well known.

\begin{lem}\label{l:fatou}
Let $G$ be a Fatou gap of period $n$ and of degree $k>1$. Then the map
$\si_d^n|_{\bd(G)}$ is monotonically semiconjugate to $\si_k$.
\end{lem}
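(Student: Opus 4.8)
The plan is to view $\bd(G)$ as a topological circle on which $h=\si_d^n|_{\bd(G)}$ acts as an orientation-preserving degree-$k$ self-map, and then to obtain the semiconjugacy with $\si_k$ by the standard linearization of such maps; this parallels the degree-one situation of Lemma~\ref{l:siegel}, where the boundary map is semiconjugate to a rotation. Since $G$ is a Fatou gap, its basis $G'$ is uncountable and $\bd(G)$ is a Jordan curve formed by the closures of the arcs of $\uc$ complementary to $G'$ together with the edges of $G$ spanning the holes of $G'$. Because $G$ has period $n$ and $\lam$ is sibling $\si_d$-invariant, $\si_d^n$ maps $G$ onto itself and carries each edge of $G$ to an edge or to a point, so $h$ is a well-defined continuous self-map of $\bd(G)$. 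Boundaries of gaps map forward preserving orientation, hence $h$ is monotone (orientation preserving), and by the definition of the degree of a Fatou gap (Definitions~\ref{d:gaps-i} and~\ref{d:fatou}) it is the composition of a monotone map and a covering map of degree $k$; equivalently, a lift $F\colon\R\to\R$ of $h$ is nondecreasing and satisfies $F(x+1)=F(x)+k$.

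With this setup, the semiconjugacy is the classical one for degree-$k$ circle maps with $k>1$, established for graph maps in \cite{blo86,blo87a,blo87b} (the same fact was invoked in the proof of Lemma~\ref{l:cater}). Explicitly, put $H(x)=\lim_{m\to\infty}F^m(x)/k^m$. As $F(x)-kx$ is $1$-periodic, hence bounded, the estimate $|F^{m+1}(x)/k^{m+1}-F^m(x)/k^m|\le \sup_y|F(y)-ky|/k^{m+1}$ shows the limit converges uniformly; moreover $H$ is nondecreasing, $H(x+1)=H(x)+1$, and $H(F(x))=kH(x)$. Therefore $H$ descends to a monotone degree-one map $\psi\colon\bd(G)\to\uc$ with $\psi\circ h=\si_k\circ\psi$, the desired semiconjugacy. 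Note that $\psi$ necessarily collapses each edge of $G$ to a point, consistently with Lemma~\ref{l:edges}: every edge is (pre)periodic or (pre)critical and so lies in a single level set of $H$.

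The main obstacle is the setup described above: checking that $h$ is genuinely a well-defined, continuous, orientation-preserving degree-$k$ self-map of the Jordan curve $\bd(G)$, and in particular handling the edges that $\si_d^n$ collapses to points so that $h$ stays continuous and monotone, and confirming that the resulting $\psi$ is degree one rather than collapsing $\bd(G)$ too far. Once $h$ is known to be an orientation-preserving degree-$k$ circle map, the existence of the monotone semiconjugacy with $\si_k$ is the standard fact cited above, so no further dynamical input is required.
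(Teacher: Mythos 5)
The paper gives no proof of Lemma~\ref{l:fatou} at all: it is stated as a well-known fact (the suppressed proof of Lemma~\ref{l:cater} in the source invokes the very same semiconjugacy, deferring to earlier work on graph maps). Your argument is correct and supplies the standard proof of that fact. Once $h=\si_d^n|_{\bd(G)}$ is known to lift to a nondecreasing map $F\colon\R\to\R$ with $F(x+1)=F(x)+k$, the function $H(x)=\lim_{m\to\infty}F^m(x)/k^m$ exists as a uniform limit because $F(x)-kx$ is $1$-periodic and hence bounded, and the identities $H(x+1)=H(x)+1$ and $H\circ F=kH$ descend to the desired monotone degree-one semiconjugacy of $h$ with $\si_k$; the required form of the lift is immediate from Definition~\ref{d:gaps-i}, which defines the degree of $\si_d^n|_{\bd(G)}$ precisely as the degree of the covering factor in an orientation-preserving monotone-plus-covering decomposition, so the ``main obstacle'' you flag is already discharged by the definitions. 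One small slip in your setup: $\bd(G)$ is not formed by the arcs of $\uc$ complementary to $G'$; it is the union of $G'$ itself with the edges of $G$ spanning the holes of $G'$ (the complementary arcs lie on the far side of those edges, outside $G$). This does not affect the argument, which only uses that $\bd(G)$ is a Jordan curve carrying an orientation-preserving degree-$k$ self-map. Likewise, your worry that $\psi$ might collapse $\bd(G)$ too far is groundless: $H(x+1)=H(x)+1$ already forces $\psi$ to have degree one, so the appeal to Lemma~\ref{l:edges} there is decorative rather than necessary.
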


\section{Limit geodesic laminations and their properties}\label{s:limitg}
In this section, we deal with properties of limits of $\si_d$-invariant $q$-laminations.
For the most part, the results are obtained in \cite{bopt16}.

Fix a degree $d$.
In lemmas below, we assume that a sequence of $\si_d$-invariant $q$-laminations $\lam_i$ converges to a $\si_d$-invariant geodesic lamination $\lam_\infty$.
By a \emph{strip} we mean a (open) part of the unit disk contained between two disjoint chords.
By a \emph{strip around a chord $\ell$} we mean a strip containing $\ell$.
In what follows, when talking about convergence of leaves/gaps, closeness of leaves/gaps, and closures of families of
geodesic laminations, we always use the Hausdorff metric.

\begin{dfn}\label{d:closuq}
Let $\laq_d$ be the family of all $\si_d$-invariant geodesic $q$-laminations.
We will write $\ol{\laq_d}$ for the closure of $\laq_d$.
\end{dfn}

Even though we state below a few general results, we mostly concentrate on periodic objects of limit geodesic laminations.

\begin{lem}[Lemma 2.2 \cite{bopt16}]\label{l:limleaf1}
Let $\ell$ be a periodic leaf of $\lam\in \ol{\laq_d}$.
If $\hlam\in \laq_d$ is sufficiently close to $\lam$, then any leaf of any $\hlam$ sufficiently close to $\ell$ is either equal to $\ell$ or disjoint from $\ell$.
\end{lem}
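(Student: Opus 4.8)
The plan is to exploit the rigidity of periodic leaves together with the convergence hypothesis. Let $\ell=\ol{ab}$ be the periodic leaf of period $k$ of the limit lamination $\lam\in\ol{\laq_d}$, so $\si_d^k(\ell)=\ell$, and suppose for contradiction that the conclusion fails. Then I would produce, for $\hlam$ arbitrarily close to $\lam$, a leaf $\ell'$ of $\hlam$ that is close to $\ell$, distinct from $\ell$, and yet linked with $\ell$ (neither equal nor disjoint). By compactness (Theorem~\ref{t:siblinv}) and a diagonal argument, this yields a sequence $\hlam_j\to\lam$ and leaves $\ell_j'\in\hlam_j$ with $\ell_j'\to\ell$, each $\ell_j'$ crossing $\ell$. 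The heart of the matter is to derive a contradiction from the existence of such linked approximants.

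First I would recall that each $\hlam$ is a $q$-lamination, hence (being generated by an equivalence relation) has the property that its leaves are pairwise unlinked; thus $\ell'$ crossing $\ell$ forces $\ell\notin\hlam$, which is consistent with $\hlam\ne\lam$. The key observation is that $\ell$ is periodic. Applying $\si_d^k$ and using the invariance of $\hlam_j$ (leaf invariance from Definition~\ref{d:siblinv}), I can track the forward orbit of $\ell_j'$. Since $\ell_j'\to\ell$ and $\ell$ is $\si_d^k$-fixed, the leaves $\si_d^{k}(\ell_j'),\si_d^{2k}(\ell_j'),\dots$ also converge to $\ell$ (continuity of $\si_d$ on the closed leaf-space), and each such image is again a leaf of $\hlam_j$ crossing $\ell$ (or one can arrange, by shrinking, that linking is preserved under the first return map near the fixed leaf). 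The map $\si_d^k$ acts near $\ell$ as an orientation-preserving expanding map on the relevant boundary arcs, so the endpoints of $\ell_j'$ are pushed away from those of $\ell$ under iteration.

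The main obstacle, and the crux of the argument, is converting ``expansion near a periodic leaf'' into the contradiction. The plan is this: because $\si_d^k$ is expanding on $\uc$ near the fixed endpoints $a,b$ of $\ell$, a leaf $\ell_j'$ that crosses $\ell$ and is very close to it must, under a bounded number of iterations of $\si_d^k$, be carried to a leaf that still crosses $\ell$ but is \emph{no longer} close to it --- its endpoints will have escaped a fixed neighborhood of $\{a,b\}$. This contradicts the fact that any leaf of $\hlam_j$ crossing $\ell$ with one endpoint far from $\{a,b\}$ would have to cross $\ell$ transversally at a definite size, and in the Hausdorff limit would produce a leaf of $\lam$ that crosses the leaf $\ell\in\lam$, violating the unlinkedness of leaves within the single geolamination $\lam$. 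More precisely, passing to the limit $j\to\infty$ of the escaped leaves (using compactness again to extract a convergent subsequence) yields a leaf of $\lam$ linked with $\ell$, which is impossible since $\lam$ is itself a geolamination whose leaves are pairwise unlinked.

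I expect the delicate point to be the quantitative control asserting that linking is preserved (and magnified) under the return map $\si_d^k$: one must verify that a sufficiently short leaf $\ell_j'$ straddling $\ell$ cannot have its image $\si_d^k(\ell_j')$ collapse or jump to the other side, and that the endpoints genuinely escape at a definite rate bounded below independent of $j$. This is where the hypothesis that $\ell$ is periodic (rather than merely preperiodic or arbitrary) is essential, since it provides the fixed expanding return map with repelling fixed points at $a$ and $b$ on the circle. Once this expansion estimate is in place, the Hausdorff-limit contradiction closes the argument cleanly.
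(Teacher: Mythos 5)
First, a point of reference: the paper does not prove this statement itself --- it imports it verbatim as Lemma~2.2 of \cite{bopt16} --- so the comparison below is with the standard argument behind that lemma rather than with a proof printed here. Your overall mechanism (expansion of $\si_d^n$ at the repelling periodic endpoints of $\ell$, combined with unlinkedness of leaves within a single lamination) is the right one, but as written the proof has two genuine gaps.

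The first gap is at the very start: the negation of ``equal to $\ell$ or disjoint from $\ell$'' is \emph{not} ``linked with $\ell$.'' Two distinct chords that meet either cross or share exactly one endpoint, and your argument only treats the crossing case. The shared-endpoint case (a leaf $\hell=\ol{a\hat b}$ of $\hlam$ with $\hat b$ near $b$ but $\hat b\ne b$) is not vacuous and needs a different argument; it is also precisely the place where the hypothesis $\hlam\in\laq_d$ (rather than $\hlam$ being an arbitrary invariant geolamination, where an infinite fan of leaves at $a$ is perfectly legal) is used. The standard fix: since $a$ is fixed by $\si_d^n$ and $\si_d^n$ is expanding near $b$, the leaves $\hell$, $\si_d^n(\hell)$, $\si_d^{2n}(\hell)$ are three distinct non-degenerate leaves of $\hlam$ through $a$, whereas in a $q$-lamination any point lies on at most two non-degenerate leaves (the edges of the convex hull of its finite class).

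The second gap is in the crossing case itself: your plan to iterate until an endpoint escapes a fixed neighborhood and then pass to a Hausdorff limit over $j$ does not close, because only \emph{one} endpoint is guaranteed to escape to definite size; the other may converge to $a$ or $b$, so the limit chord $L\in\lam$ may merely \emph{share an endpoint} with $\ell$ rather than cross it, and that is not a contradiction with $\lam$ being a geolamination (nor can you then run the fan argument on $\lam$, since $\lam\in\ol{\laq_d}$ need not be a $q$-lamination). Fortunately no limiting argument is needed here: if $\hell=\ol{\hat a\hat b}$ crosses $\ell=\ol{ab}$ with $\hat a$, $\hat b$ in small one-sided neighborhoods of $a$, $b$ inside the arcs $(a,b)$ and $(b,a)$ respectively, then, since $\si_d^n$ fixes $a$ and $b$ and pushes $\hat a$, $\hat b$ strictly away from them within the same arcs, the circular order becomes $a,\hat a,\si_d^n(\hat a), b, \hat b, \si_d^n(\hat b)$, so $\si_d^n(\hell)$ \emph{crosses} $\hell$; both are leaves of the single invariant lamination $\hlam$, an immediate contradiction. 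With these two repairs (and with $n$ taken to be the period of the endpoints, doubling if $\si_d^k$ swaps $a$ and $b$), your argument becomes correct and essentially coincides with the cited proof.
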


Definition~\ref{d:rigileaf} introduces the concept of rigidity.

\begin{dfn}\label{d:rigileaf}
A leaf/gap $G$ of $\lam$ is \emph{rigid} if any $q$-lamination close to $\lam$ has $G$ as its leaf/gap.
\end{dfn}

A series of lemmas proved in \cite{bopt16} studies rigidity of periodic leaves/gaps of laminations from $\ol{\laq_d}$. These are combinatorial counterparts of the fact that repelling periodic points survive under small deformations of complex polynomials.
Observe that (periodic) leaves of geodesic laminations are either edges of gaps or limits of other leaves.
By a ($\si_d$-)\emph{collapsing polygon} we mean a polygon $P$, whose edges map under $\si_d$ to the same non-degenerate chord.
Thus, if a point makes a circuit around $P$, its $\si_d$-image moves back and forth along the same non-degenerate chord; as before,
if it does not cause ambiguity, we simply talk about \emph{collapsing polygons}.
When we say that $Q$ is a \emph{collapsing polygon of a geodesic lamination $\lam$}, we mean
that all edges of $Q$ are leaves of $\lam$; we also say that $\lam$ \emph{contains a collapsing polygon $Q$}. However, this does not imply that $Q$ is a gap of $\lam$ as $Q$ might be further subdivided by
leaves of $\lam$ inside $Q$.
Let us now quote some results of \cite{bopt16} concerning (pre)\-pe\-riodic leaves and gaps.

\begin{lem}[Lemma 2.5 - 2.10 \cite{bopt16}]\label{l:per-rigid}
Let $\lam\in \ol{\laq_d}$.
If $\hell\in \lam$ is a non-degenerate rigid leaf, a leaf $\ell\in \lam$ is such that $\si^k_d(\ell)=\hell$ for some
$k\ge 0$, and no leaf $\ell$, $\si_d(\ell)$, $\dots$, $\si^{k-1}(\ell)$ is contained in a collapsing polygon of $\lam$, then $\ell$ is rigid.
Also, the following objects are rigid:
\begin{enumerate}
 \item periodic leaves that are not edges of collapsing polygons;
 \item finite periodic gaps;
 \item $($pre$)$periodic leaves of a gap that eventually maps to a periodic gap;
 \item finite gaps that eventually map onto periodic gaps;
 \item periodic Fatou gaps whose images have no critical edges.
\end{enumerate}
\end{lem}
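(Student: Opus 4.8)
The plan is to treat the displayed implication as the engine of the proof and to derive the five itemized statements from it together with one genuinely new base case, namely item (1). First I would prove item (1): rigidity of a periodic leaf $\ell=\ol{ab}$ of period $p$ that is not an edge of a collapsing polygon. Passing to $g=\si_d^{2p}$ if necessary, $g$ fixes each endpoint $a$, $b$ and expands the circle there (the local multiplier is a power of $d$, hence $>1$), because $\ell$ does not collapse. Given a $q$-lamination $\hlam$ close to $\lam$, Lemma~\ref{l:limleaf1} says that every leaf of $\hlam$ near $\ell$ either equals $\ell$ or is disjoint from $\ell$, so it suffices to rule out that \emph{all} nearby leaves of $\hlam$ are strictly disjoint from $\ell$. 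Assuming otherwise, pick such a leaf $\ell'$ on one side of $\ell$; since $\hlam$ is invariant, all $g^j(\ell')$ are leaves of $\hlam$, and the expansion at $a$, $b$ pushes $g^j(\ell')$ steadily away from $\ell$ on that side. Letting $\hlam\to\lam$ forces these intermediate leaves to converge to leaves of $\lam$ filling a whole strip abutting $\ell$, contradicting the structure of $\lam$ near the periodic leaf $\ell$. Hence some nearby leaf of $\hlam$ equals $\ell$, i.e.\ $\ell$ is rigid.

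Next I would prove the main implication by induction on $k$, the base $k=0$ being the hypothesis that $\hell$ is rigid. For the inductive step it suffices to show: if $m'=\si_d(\ell)$ is rigid and $\ell$ is not an edge of a collapsing polygon, then $\ell$ is rigid. Since $m'$ is rigid, every nearby $q$-lamination $\hlam$ contains $m'$, and $m'$ is non-degenerate because $\ell$ does not collapse. By sibling invariance (Definition~\ref{d:siblinv}(3)) $\hlam$ contains $d$ pairwise disjoint preimage leaves of $m'$; because $\hlam$ is close to $\lam$ and $\lam$ has the preimage $\ell$ of $m'$ there, one of these preimages, say $\ell'$, is close to $\ell$. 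The endpoints of $\ell'$ are preimages under $\si_d$ of the (now fixed) endpoints of $m'$; as $\ell$ is not an edge of a collapsing polygon, the two preimage points near the endpoints of $\ell$ are uniquely determined, so the endpoints of $\ell'$ coincide with those of $\ell$ and $\ell'=\ell$. Thus $\ell\in\hlam$, proving rigidity. The collapsing-polygon hypothesis is exactly what guarantees this uniqueness: along a collapsing polygon several edges share the image $m'$, and the selection of a preimage near $\ell$ is no longer forced.

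Items (2)--(5) then follow by combining item (1), the main implication, and the structural Lemma~\ref{l:edges}. For a finite periodic gap (item 2), $\si_d^p$ permutes its vertices, so every edge is a periodic non-collapsing leaf, hence rigid by item (1); since nearby leaves are disjoint from these edges and $\lam$ has no leaves inside the gap, a nearby $\hlam$ cannot subdivide it, and the gap persists. For (pre)periodic leaves of a gap eventually mapping to a periodic gap (item 3) and for finite gaps eventually mapping onto periodic gaps (item 4), the periodic target has rigid edges by items (1) and (2), and one pulls these back by the main implication, the standing hypotheses guaranteeing that no intermediate object is a collapsing-polygon edge. For a periodic Fatou gap whose images carry no critical edges (item 5), Lemma~\ref{l:edges} shows every edge is (pre)periodic; each such edge is then rigid by items (1) and (3), and the absence of critical edges removes the only source of non-rigidity among the edges, so all edges persist in nearby $\hlam$ and, there being no room to subdivide, the whole gap is rigid.

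The main obstacle is item (1): converting the dichotomy of Lemma~\ref{l:limleaf1} (``equal or disjoint'') into the positive statement that $\ell$ is actually realized in every nearby $q$-lamination. The delicate point is the expansion argument, where one must control the accumulation of the iterated leaves $g^j(\ell')$ and rule out that $\ell$ is approached by genuinely distinct nearby leaves; this is where the periodicity of the endpoints and the non-collapsing hypothesis must be used most carefully. Once item (1) is in place, the pullback step and the gap cases are largely combinatorial bookkeeping built on sibling invariance and Lemma~\ref{l:edges}.
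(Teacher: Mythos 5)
The paper does not actually prove this lemma: it is imported wholesale from \cite{bopt16} (``Lemma 2.5 - 2.10''), so there is no in-text argument to compare yours against, and your proposal must be judged on its own. The overall strategy (rigidity of periodic leaves as the base case, propagation to preimages via sibling invariance, then the gap statements) is the right one, but two steps contain genuine gaps. In your base case (1), the contradiction is never reached: from a leaf $\ell'\in\hlam$ disjoint from $\ell$ you push forward by the expanding return map and claim that the leaves $g^j(\ell')$ force, as $\hlam\to\lam$, ``leaves of $\lam$ filling a whole strip abutting $\ell$, contradicting the structure of $\lam$ near $\ell$.'' Nothing forbids $\lam$ from having leaves accumulating on $\ell$ from either side --- the paper itself notes that periodic leaves are either edges of gaps or limits of other leaves --- so there is no contradiction to be had there. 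The workable argument runs backward: using sibling invariance, repeatedly choose $\si_d$-preimages of $\ell'$ staying near $\ell$; the local inverse branches at the repelling periodic endpoints $a$, $b$ contract, so these leaves converge to $\ol{ab}=\ell$, and closedness of $\hlam$ yields $\ell\in\hlam$. Note also that your argument for (1) never uses the hypothesis that $\ell$ is not an edge of a collapsing polygon; since the conclusion is false without that hypothesis, an argument that does not invoke it cannot be complete.

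The second gap is in the inductive step, where you assert that one of the $d$ disjoint siblings of the rigid leaf $m'$ in $\hlam$ ``is close to $\ell$'' because $\hlam$ is close to $\lam$. Hausdorff closeness produces \emph{some} leaf of $\hlam$ near $\ell$, but that leaf need not be a sibling of $m'$ (its image is merely near $m'$, not equal to it). What has to be shown is that the partition of the $2d$ preimage points of the endpoints of $m'$ into $d$ unlinked pairs is forced to pair the preimage point near $a$ with the one near $b$; a wrong pairing, persisting along a sequence $\hlam\to\lam$, yields in the limit chords of $\lam$ through these preimage points that assemble into a collapsing polygon meeting $\ell$ --- this is precisely where the collapsing-polygon hypothesis enters. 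Your stated justification (``the two preimage points near the endpoints of $\ell$ are uniquely determined'') addresses local injectivity of $\si_d$, which was never the issue, rather than the pairing. Finally, in item (5), persistence of the edges of a Fatou gap does not by itself exclude that a nearby $q$-lamination inserts leaves in the gap's interior; ``there being no room to subdivide'' is an assertion of the very thing to be proved (for finite gaps one can argue that the vertices form a single class of the approximating equivalence relation, but for Fatou gaps this step carries real content and is left unaddressed).
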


Using these results and other tools, we characterize all $\si_2$-invariant limit laminations.
Each such lamination $\lam$ can be described as a specific modification of an appropriate geodesic lamination $\lam^q$ from $\laq_2$.

\begin{dfn}\label{d:coexist}
Two geodesic laminations \emph{coexist} if their union is a lamination.
\end{dfn}

This notion was used in \cite{bopt13}.
If two geodesic laminations coexist, then a leaf of one lamination is either also a leaf of the other lamination or is located in a gap of the other geodesic lamination.

\begin{dfn}\label{d:hyperb}
A $\si_2$-invariant geodesic lamination is called \emph{hyperbolic} if it has a periodic Fatou gap of degree two.
\end{dfn}


Clearly, if a $\si_2$-invariant geodesic lamination $\lam$ has a periodic Fatou gap $U$ of
period $n$ and of degree greater than one, then the degree of $\si_2|_{\bd(U)}$ is two.
By \cite{thu85}, there is a unique edge $M(\lam)$ of $U$ with $\si_2^n(M(\lam))=M(\lam)$;
$M(\lam)$ is such that either all leaves $M(\lam)$, $\dots$, $\si_2^{n-1}(M(\lam))$ are
pairwise disjoint, or their union can be broken down into several gaps permuted by $\si_2$, in each of which
edges are ``rotated" by the appropriate power of $\si_2$, or $n=2k$ and
$\si_2^k$ flips $M(\lam)$ on top of itself while all leaves $M(\lam)$, $\dots$, $\si_2^{k-1}(M(\lam))$ are pairwise disjoint.
In fact, $M(\lam)$ and its sibling $M'(\lam)$ are the two \emph{majors} of $\lam$ while
$\si_2(M(\lam))=\si_2(M'(\lam))=m(\lam)$ is the \emph{minor} of $\lam$ \cite{thu85}
(recall that a \emph{major} of a $\si_2$-invariant geodesic lamination is the longest leaf of $\lam$).
Any $\si_2$-invariant hyperbolic geodesic lamination $\lam$ is actually
a geodesic lamination $\lam_\sim$ generated by the appropriate
\emph{hyperbolic $\si_2$-invariant laminational equivalence relation}
$\sim$ so that the topological polynomial $f_\sim$ considered on the
entire complex plane is topologically conjugate to a complex quadratic
\emph{hyperbolic} polynomial; this justifies our terminology.

\begin{dfn}\label{d:criset}
A \emph{critical set} $\cri(\lam)$ of a $\si_2$-invariant geodesic
lamination $\lam$ is either a critical leaf or a gap $G$ with
$\si_2|_{\bd(G)}$ of degree two.
\end{dfn}

A $\si_2$-invariant $q$-lamination $\lam$ either has a finite critical set (a critical leaf, or a finite critical gap) or is hyperbolic.
In both cases, the critical set is unique.

\begin{dfn}
By a \emph{generalized critical quadrilateral} $Q$ we mean either a 4-gon whose $\si_2$-image is a leaf, or a critical
leaf (whose image is a point). A \emph{collapsing \ql} is a generalized
critical \ql{} with four distinct vertices.
\end{dfn}

If $\cri(\lam)$ is a generalized critical \ql{}, then
$\si_2(\cri(\lam))=m(\lam)$. The notion of generalized critical
quadrilateral was used in \cite{bopt17} in our study of geodesic
laminations of arbitrary degree.

Theorem~\ref{t:limitlam} describes all geodesic laminations from
$\olq$. A periodic leaf $\n$ such that the period of its endpoints is
$k$ and all leaves $\n,$ $\si_2(\n),$ $\dots,$ $\si_2^{k-1}(\n)$ are
pairwise disjoint, is said to be a \emph{fixed return} periodic leaf.


\begin{thm}[Theorem 3.8 \cite{bopt16}]\label{t:limitlam}
A geodesic lamination $\lam$ belongs to $\olq$ if and only if there exists a unique maximal $q$-lamination $\lam^q$ coexisting with $\lam$ and such that either $\lam=\lam^q$ or $\cri(\lam)\subset \cri(\lam^q)$ is a
generalized critical quadrilateral, and exactly one of the following holds.
\begin{enumerate}
\item The critical set $\cri(\lam^q)$ is finite, and $\cri(\lam)$ is the convex hull of two edges or vertices of\, $\cri(\lam^q)$ with the same $\si_2$-image;
\item the lamination $\lam^q$ is hyperbolic with a critical Fatou gap $\cri(\lam)$ of period $n$, and exactly one of the following holds:
\begin{enumerate}
\item the set $\cri(\lam)=\ol{ab}$ is a critical leaf with a periodic
    endpoint of period $n$, and $\lam$ contains exactly two
    $\si_2^n$-pullbacks of $\ol{ab}$ that intersect $\ol{ab}$
    (one of these pullbacks shares an endpoint $a$ with
    $\ol{ab}$, and the other one shares an endpoint $b$ with
    $\ol{ab}$).
\item the critical set $\cri(\lam)$ is a collapsing quadrilateral, and $m(\lam)$ is a fixed return periodic leaf.
\end{enumerate}
\end{enumerate}
Thus, any $\si_2$-invariant $q$-lamination corresponds to finitely many geodesic laminations from $\olq$, and the union of all of their minors is connected.
\end{thm}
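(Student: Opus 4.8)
The plan is to establish both directions of the equivalence, isolating the trivial alternative $\lam=\lam^q$ (where $\lam$ is itself a $q$-lamination, so nothing is inserted) from the case of a proper limit point, and then to read off the concluding sentence as a corollary. For \emph{necessity}, suppose $\lam\in\olq$, say $\lam=\lim_i\lam_i$ with $\lam_i\in\laq_2$; if $\lam\in\laq_2$ take $\lam^q=\lam$, so assume $\lam$ is a genuine limit lamination. I would first pin down the critical set: by the already established fact that every $\si_2$-invariant limit lamination contains a critical leaf or critical quadrilateral, and since $\lam$ is sibling $\si_2$-invariant (Theorem~\ref{t:siblinv}), $\cri(\lam)$ is forced to be a generalized critical quadrilateral — it cannot be a Fatou gap (else $\lam$ would be hyperbolic, hence a $q$-lamination), and a larger finite collapsing polygon would persist rigidly and obstruct $\lam$ being a proper limit. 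The core of this direction is the construction of $\lam^q$, which I would extract as the \emph{rigid skeleton} of the sequence: passing to a subsequence along which $\cri(\lam_i)$ converges, the rigidity results of Lemma~\ref{l:per-rigid} guarantee that all periodic and (pre)periodic leaves and finite gaps that are not edges of collapsing polygons, together with the periodic Fatou gaps whose images carry no critical edge, survive in the limit and assemble into one $q$-lamination coexisting with $\lam$; filling the critical quadrilateral $\cri(\lam)$ and its grand orbit by the appropriate critical set (a finite critical gap or a critical Fatou gap) produces $\lam^q$ with $\cri(\lam)\subset\cri(\lam^q)$. Uniqueness and maximality follow since any coexisting $q$-lamination must share this skeleton and can differ from $\lam^q$ only inside $\cri(\lam^q)$, where a $q$-lamination admits no finer subdivision.

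With $\lam^q$ in hand I would split on whether $\cri(\lam^q)$ is finite or a Fatou gap. If $\cri(\lam^q)$ is finite, then $\cri(\lam)$, being a critical leaf or collapsing quadrilateral inside it with the same $\si_2$-image, is exactly the convex hull of two vertices or two edges of $\cri(\lam^q)$ with equal image, which is alternative (1). If $\lam^q$ is hyperbolic with critical Fatou gap $U=\cri(\lam^q)$ of period $n$ and major $M$, I would analyze how $\cri(\lam)$ degenerates inside $U$. When $\cri(\lam)$ is a critical leaf it must, as a Hausdorff limit of critical sets collapsing inside $U$, share an endpoint with $M$; by Lemma~\ref{l:limleaf1} and the rigidity of the edges of $U$ that endpoint is periodic of period $n$, and exactly the two $\si_2^n$-pullbacks meeting $\cri(\lam)$ at its two endpoints are forced, giving (2a). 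When $\cri(\lam)$ is a collapsing quadrilateral, its image $m(\lam)=\si_2(\cri(\lam))$ equals $\si_2(M)$, and the pairwise disjointness of $M,\si_2(M),\dots,\si_2^{n-1}(M)$ — the fixed-return property — is precisely what permits a quadrilateral rather than a leaf to appear in the limit; this is (2b).

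Conversely, for \emph{sufficiency}, given $\lam$ satisfying the stated hypotheses (coexistence with a unique maximal $\lam^q$ and one of (1), (2a), (2b)), I would exhibit $\lam_i\in\laq_2$ with $\lam_i\to\lam$. In case (1) I would perturb the generalized critical quadrilateral $\cri(\lam)$ to nearby genuinely critical sets whose vertices are full $\si_2$-preimages of generic points, then run Thurston's pullback construction to build $q$-laminations whose remaining leaves stay close to those of $\lam^q$; letting the perturbation tend to $0$ yields convergence to $\lam$, the limit being sibling invariant by Theorem~\ref{t:siblinv}. In cases (2a) and (2b) I would instead approximate by \emph{tuning}: retain the structure of $\lam^q$ outside $U$ and insert inside $U$ a shrinking family of genuine critical sets converging to the critical leaf (in (2a), anchored at the period-$n$ endpoint of $M$) or to the collapsing quadrilateral (in (2b), whose image is the fixed-return minor). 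The periodicity and fixed-return hypotheses are exactly what make these inserted objects compatible with the invariance demanded of $q$-laminations, so the construction is realizable and its limit is $\lam$.

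The main obstacle, I expect, lies in the necessity direction: proving that the rigid parts of the $\lam_i$ truly assemble into a single coexisting $q$-lamination and that $\lam$ differs from it \emph{only} by the critical insertion and its grand orbit — that no stray non-rigid leaf away from the critical set can survive in the limit. A close second is confirming, in the hyperbolic subcases, that the approximating tunings converge to $\lam$ and to nothing larger. Once the equivalence holds, the final sentence is immediate: a fixed $\lam^q$ admits only finitely many admissible insertions (the finitely many edges/vertices of a finite $\cri(\lam^q)$, or the two endpoints of $M$ together with the single collapsing quadrilateral in the hyperbolic case), so only finitely many $\lam\in\olq$ correspond to it; and since the resulting minors are all equal to, or endpoints of, $\si_2(M)$ (respectively images of edges and vertices of one finite critical set), their union is connected.
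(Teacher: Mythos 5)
A preliminary caveat: the paper you were given contains no proof of this statement at all. It is imported verbatim as Theorem 3.8 of \cite{bopt16}, and Section~\ref{s:limitg} only \emph{uses} it (together with Lemma~\ref{l:lam-uniq}) to derive Corollary~\ref{c:non-const-lim}. So your proposal can only be measured against the supporting machinery the paper reports from \cite{bopt16}: the rigidity lemmas (Lemmas~\ref{l:limleaf1}, \ref{l:per-rigid}), the uniqueness statement of Lemma~\ref{l:lam-uniq}, and density of realizable critical sets. Your overall architecture --- rigid skeleton for necessity, Thurston pullback and tuning for sufficiency --- is the expected one, but the two load-bearing steps are asserted rather than proved, and you candidly flag one of them yourself; a proposal that names its main obstacle without resolving it has a genuine gap.

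Concretely: (i) the assembly of the rigid leaves of the approximating $q$-laminations into a \emph{single} coexisting $q$-lamination $\lam^q$, with $\lam$ differing from it only by the critical insertion and its grand orbit, is the heart of necessity and of uniqueness/maximality, and your one-line justification (``a $q$-lamination admits no finer subdivision'' inside $\cri(\lam^q)$) begs the question: what must be shown is that no stray non-rigid leaf away from the grand orbit of the critical set survives in the limit. The paper's proof of Lemma~\ref{l:lam-uniq} shows what such an argument actually takes --- a case analysis excluding caterpillar, Siegel, and degree-two gaps via Lemmas~\ref{l:cater} and \ref{l:siegel} and semiconjugacy arguments --- and nothing of that kind appears in your sketch; moreover Lemma~\ref{l:lam-uniq} itself is unavailable in case (2a), since a $q$-lamination cannot have a critical leaf with a periodic endpoint, so uniqueness there cannot be delegated to it. (ii) The precise pullback condition in (2a) --- \emph{exactly} two $\si_2^n$-pullbacks of $\ol{ab}$ meeting $\ol{ab}$, one at each endpoint --- is the actual discriminator of membership in $\olq$: there are several distinct invariant laminations containing the same critical leaf $\ol{ab}$ (for instance caterpillar-type configurations with all intersecting pullbacks accumulating on one side), and you neither argue why limits of $q$-laminations are forced into precisely this configuration nor verify, in the sufficiency direction, that your tuned approximants converge to a lamination with this pullback structure rather than to a larger one. (iii) Smaller but real: in sufficiency case (1), Thurston's pullback of a generically perturbed quadrilateral yields an invariant geodesic lamination but not in general a $q$-lamination, so your sequence need not lie in $\laq_2$; to repair this one must choose approximating critical sets realizable by laminational equivalences (e.g.\ dendritic ones, using density of the corresponding minors in $\qml$), which is exactly how the paper argues the analogous approximation step in its proof of Corollary~\ref{c:non-const-lim}.
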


Given a geodesic lamination $\lam$, let $\lam^q$ denote the $\si_2$-invariant $q$-lamination associated with $\lam$ defined in Theorem~\ref{t:limitlam}.

To interpret the Mandelbrot set as a quotient of $\olq$, we define a certain equivalence relation on $\olq$, cf. \cite{bopt16}.

\begin{dfn}\label{d:minor}
Suppose that $\lam'$, $\lam''\in \olq$.
Then the geodesic laminations $\lam_1$ and $\lam_2$ are said to be \emph{minor equivalent} if there exists a
finite collection of geodesic laminations $\lam_1=\lam'$, $\lam_2$, $\dots$, $\lam_k=\lam''$ from $\olq$ such that for each $i$ with $1\le i\le k-1$, the minors $m(\lam_i)$ and $m(\lam_{i+1})$ of the geodesic
laminations $\lam_i$ and $\lam_{i+1}$ are non-disjoint.
\end{dfn}

The last result of \cite{bopt16} is the following theorem.
Let $\psi:\olq\to \uc/\qml$ be the map which associates to each geodesic
lamination $\lam\in \olq$ the $\qml$-class of the endpoints of the
minor $m(\lam)$ of $\lam$.

\begin{thm}[Theorem 3.10 \cite{bopt16}]\label{t:main}
The map $\psi:\olq\to \uc/\qml$ is continuous.
The partition of $\olq$ into classes of minor equivalence is upper semi-continuous, and the quotient space of $\olq$ with respect to the minor equivalence is homeomorphic to $\uc/\qml$.
\end{thm}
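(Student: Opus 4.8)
The plan is to establish Theorem~\ref{t:main} in three stages, leveraging the structural description in Theorem~\ref{t:limitlam} as the central tool. First I would prove continuity of the map $\psi:\olq\to \uc/\qml$. The key observation is that the minor $m(\lam)$ varies ``semi-continuously'' with $\lam$ in the Hausdorff metric: if $\lam_i\to\lam$ in $\olq$, then the majors $M(\lam_i)$ (the longest leaves) accumulate on leaves of $\lam$ that are majors of $\lam$, since being a longest leaf is a closed condition under Hausdorff convergence. Applying $\si_2$ (which is continuous on chords) shows $m(\lam_i)$ accumulates on $m(\lam)$. Composing with the quotient map $\uc\to\uc/\qml$, which collapses each leaf of $\qml$ to a point, I would verify that the $\qml$-classes of the endpoints converge. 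The one subtlety to handle carefully is that a limit lamination may have two majors with a common image, but since $\psi$ records only the $\qml$-class of the minor's endpoints, and $\qml$-classes are $\qml$-saturated, this ambiguity disappears in the quotient.

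Second I would address upper semi-continuity of the partition into minor-equivalence classes. Here the essential input is Thurston's theorem that the collection of all minors forms the geodesic lamination $\qml$, together with the fact (Definition~\ref{d:minor}) that two laminations are minor equivalent when their minors are connected through a finite chain of pairwise non-disjoint minors. Upper semi-continuity amounts to showing that if $\lam_i\to\lam$ and each $\lam_i$ is minor equivalent to a fixed lamination $\mu_i$ with $\mu_i\to\mu$, then $\lam$ is minor equivalent to $\mu$. I would translate this into a statement about the minors in $\qml$: non-disjointness of minors is a closed condition, so a limit of chains of non-disjoint minors whose consecutive links survive in the limit yields a chain witnessing minor equivalence of the limits. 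The closing sentence of Theorem~\ref{t:limitlam} --- that each $q$-lamination corresponds to finitely many elements of $\olq$ whose minors have connected union --- guarantees that minor-equivalence classes are precisely the preimages under $\psi$ of $\qml$-classes, so the partition coincides with the fibers of $\psi$.

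Third, having identified the minor-equivalence classes as the fibers of $\psi$, I would construct the induced map $\ol\psi$ from the quotient $\olq/\!\sim_{\me}$ to $\uc/\qml$ and show it is a homeomorphism. Surjectivity follows because every $\qml$-class arises as the minor of some $\si_2$-invariant lamination (Thurston), and such a lamination lies in $\olq$ by Theorem~\ref{t:limitlam}. Injectivity is exactly the statement that distinct $\qml$-classes have disjoint $\psi$-preimages, which holds since $\psi$ was defined fiberwise over $\qml$-classes. Continuity of $\ol\psi$ descends from continuity of $\psi$, and because $\olq$ is compact (being a closed subspace of the compact space of sibling invariant laminations, by Theorem~\ref{t:siblinv}) and $\uc/\qml$ is Hausdorff, a continuous bijection from the compact quotient is automatically a homeomorphism.

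I expect the main obstacle to be the upper semi-continuity of the partition, specifically controlling the behavior of minors in the degenerate and boundary cases classified in Theorem~\ref{t:limitlam}(2). In case (2-a) the minor is a fixed-return periodic leaf and in the ``type b'' situation described in the introduction the minor degenerates to an endpoint, so a convergent sequence of laminations may have minors that jump between a non-degenerate leaf and its endpoints. The delicate point is to verify that such jumps are always absorbed within a single $\qml$-class --- that is, that a non-degenerate leaf of $\qml$ and its endpoints lie in the same fiber structure --- so that the partition into minor-equivalence classes remains upper semi-continuous despite this discontinuity of $m(\lam)$ as a chord-valued function. Establishing this compatibility between the degeneration phenomena of Theorem~\ref{t:limitlam} and the combinatorial structure of $\qml$ is where the bulk of the careful argument will reside.
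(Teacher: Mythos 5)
The first thing to note is that the paper does not actually prove this statement: Theorem~\ref{t:main} is imported from \cite{bopt16} (``The last result of \cite{bopt16} is the following theorem''), so there is no in-paper argument to compare yours against. Judged on its own terms, your outline is sound and is essentially the argument one would expect given the surrounding material: continuity of the chord-valued minor map via convergence of majors (length is continuous on chords, so a Hausdorff limit of longest leaves is a longest leaf), composition with the continuous quotient map $\uc\to\uc/\sim_\qml$, identification of the minor-equivalence classes with the fibers of $\psi$ via the last clause of Theorem~\ref{t:limitlam}, and the standard fact that a continuous surjection from a compact space ($\olq$ is compact by Theorem~\ref{t:siblinv}) onto a Hausdorff space induces an upper semi-continuous fiber partition and a homeomorphism from the quotient onto the image.

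Two remarks. First, the real work hides in the fiber identification, not in continuity: you need both that non-disjoint minors have endpoints in a common $\sim_\qml$-class (so minor equivalence refines the fibers) and, conversely, that all laminations of $\olq$ whose minors are edges or vertices of the convex hull of a fixed $\sim_\qml$-class are linked by a chain --- for a class with five or more elements two such edges can be disjoint, and it is precisely the connectedness of the union of minors from Theorem~\ref{t:limitlam}, together with the uniqueness of the associated $\lam^q$, that closes this. Your direct stage-two argument about limits of chains is redundant once this is in place (and, as stated, would also need a uniform bound on chain lengths). Second, the ``jump'' you worry about in your last paragraph does not threaten continuity: your own major-convergence argument shows that $\lam\mapsto m(\lam)$ is continuous as a map into the space of chords, and the hyperbolic laminations --- the ones whose non-degenerate minors sit over classes where nearby limit laminations carry degenerate minors --- are isolated in $\olq$. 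The degeneration phenomenon only affects how many elements of $\olq$ lie over a given $\qml$-class, i.e., the fiber structure, which is already absorbed by the first remark.
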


One can understand Theorem~\ref{t:main} as follows.
For every geodesic lamination $\lam$ define its \emph{minor set} as the image of its
critical set unless $\lam$ is hyperbolic in which case we call
$m(\lam)$ the \emph{minor set} of $\lam$. Then $\psi$ can be viewed as
the map associating to each class $A$ of minor equivalence in $\olq$
the minor set of the geodesic lamination $\lam^q$ that is the only $q$-lamination in $A$.
The minor set of $\lam^q$ is in fact the convex hull of the union of minors of all laminations in $A$.

We want to modify this result by considering the maximal perfect subset of $\olq$.
Observe that this subset consists of all non-isolated laminations.
Equivalently, we consider geodesic laminations which are limits of sequences of pairwise
distinct $\si_2$-invariant geodesic q-laminations.
Theorem~\ref{t:limitlam} implies Corollary~\ref{c:non-const-lim}.

\begin{cor}\label{c:non-const-lim}
A geodesic lamination $\lam\in \olq$ is non-isolated in $\olq$ if and
only if one of the following holds:
\begin{enumerate}
\item the critical set $\cri(\lam^q)$ is finite, and $\cri(\lam)$ is the convex hull of two
edges or vertices of $\cri(\lam^q)$ with the same $\si_2$-image;
\item the lamination $\lam^q$ is hyperbolic with a critical Fatou gap $\cri(\lam)$
    of period $n$, and exactly one of the following holds:
\begin{enumerate}
\item the set $\cri(\lam)=\ol{ab}$ is a critical leaf with a periodic
    endpoint of period $n$, and $\lam$ contains exactly two
    $\si_2^n$-pullbacks of $\ol{ab}$ that intersect $\ol{ab}$
    (one of these pullbacks shares an endpoint $a$ with
    $\ol{ab}$ and the other one shares an endpoint $b$ with
    $\ol{ab}$).
\item the set $\cri(\lam)$ is a collapsing quadrilateral, and $m(\lam)$ is a
    fixed return periodic leaf.
\end{enumerate}
\end{enumerate}
\end{cor}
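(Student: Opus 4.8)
The plan is to read Corollary~\ref{c:non-const-lim} off the complete description of $\olq$ given by Theorem~\ref{t:limitlam}. First I would make the standard topological reduction: since $\laq_2$ is dense in $\olq=\ol{\laq_2}$, a point of $\olq$ is isolated exactly when it is an isolated $q$-lamination, so $\lam\in\olq$ is non-isolated iff either $\lam$ is not a $q$-lamination, or $\lam=\lam^q$ is a $q$-lamination that is a limit of pairwise distinct $q$-laminations. By Theorem~\ref{t:limitlam}, each $\lam\ne\lam^q$ in $\olq$ has $\cri(\lam)\subsetneq\cri(\lam^q)$ a generalized critical quadrilateral and satisfies one of (1),(2a),(2b); such a $\lam$ is obtained from $\lam^q$ by inserting leaves that subdivide a gap of $\lam^q$, so $\lam\supsetneq\lam^q$. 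In particular $\lam$ cannot itself be a $q$-lamination, for otherwise it would be a $q$-lamination coexisting with $\lam$ and strictly larger than $\lam^q$, contradicting the maximality of $\lam^q$ asserted in Theorem~\ref{t:limitlam}. Hence every modification $\lam\ne\lam^q$ is automatically a non-isolated point of $\olq$ and satisfies one of (1),(2a),(2b), which settles all such cases and reduces the problem to the honest $q$-laminations $\lam=\lam^q$.

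Next I would pin down, by a short combinatorial observation, exactly which $\lam=\lam^q$ satisfy (1),(2a),(2b). Since here $\cri(\lam)=\cri(\lam^q)$ and a critical gap has an even number of vertices, the convex hull of two vertices of $\cri(\lam^q)$ is a leaf, while the convex hull of two edges of a critical gap with more than four vertices spans at most four of them and is thus a proper subset of the gap. Therefore condition (1) holds for $\lam=\lam^q$ precisely when $\cri(\lam^q)$ is a \emph{critical leaf} (two vertices with equal image) or a \emph{collapsing quadrilateral} (two opposite edges sharing a common image $m(\lam)$). Moreover (2a) and (2b) can never hold when $\lam=\lam^q$: they require $\cri(\lam)$ to be a critical leaf, respectively a collapsing quadrilateral, lying \emph{strictly inside} a periodic Fatou gap, whereas for a hyperbolic $\lam^q$ one has $\cri(\lam)=\cri(\lam^q)$ equal to that Fatou gap. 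Thus $\lam=\lam^q$ satisfies one of (1),(2a),(2b) exactly when $\cri(\lam^q)$ is a critical leaf or a collapsing quadrilateral, and it remains to match this dichotomy with (non)isolation.

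For sufficiency I would exhibit, when $\cri(\lam^q)$ is a critical leaf or a collapsing quadrilateral, a sequence of pairwise distinct $q$-laminations converging to $\lam^q$ in the Hausdorff metric: for a collapsing quadrilateral by sliding its four vertices slightly along $\uc$ and completing the perturbed critical set to a $q$-lamination via the Thurston pullback construction, and for a critical leaf by approximating it with collapsing quadrilaterals degenerating onto it. For necessity (contrapositive) I would show that if $\cri(\lam^q)$ is a finite critical gap with more than four vertices, or a periodic Fatou gap (the hyperbolic case), then $\lam^q$ is isolated. Here the rigidity results of Lemma~\ref{l:per-rigid} are the engine: in the hyperbolic case the images of the critical Fatou gap have no critical edges, since its unique periodic edge $M$ is not critical, so item (5) makes that Fatou gap rigid; in the finite case the critical gap is (pre)periodic, so items (2) and (4) make it rigid. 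In either situation every $q$-lamination sufficiently close to $\lam^q$ must have $\cri(\lam^q)$ among its gaps, and since by Theorem~\ref{t:limitlam} $\lam^q$ is the unique maximal $q$-lamination compatible with that critical set, the nearby $q$-lamination coincides with $\lam^q$; hence $\lam^q$ is isolated.

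The hard part will be the two extremal steps rather than the bookkeeping. On the sufficiency side one must verify that the perturbed critical data really does extend to genuine $q$-laminations and that their solids converge to $\lam^+$, which requires continuity of the pullback construction under motion of the critical set. On the necessity side the crux is upgrading rigidity of the critical set to rigidity of the \emph{entire} lamination: using Lemma~\ref{l:limleaf1} together with the propagation of rigidity along grand orbits in Lemma~\ref{l:per-rigid}, one argues that once the critical gap and all its preimages are forced, no remaining leaf of a nearby $q$-lamination can differ from a leaf of $\lam^q$. This passage from stability of the critical data to stability of the whole $q$-lamination is the main obstacle.
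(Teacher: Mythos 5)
Your overall strategy matches the paper's: both arguments read the corollary off Theorem~\ref{t:limitlam}, observing that any $\lam\in\olq$ with $\lam\ne\lam^q$ lies in $\olq\sm\L^q_2$ and is therefore automatically a limit of pairwise distinct $q$-laminations, which disposes of cases (2a), (2b), of case (1) when $\cri(\lam^q)$ has more than four vertices, and of the diagonal insertions into a critical quadrilateral. Two points of divergence are worth recording. First, for the remaining sufficiency case $\lam=\lam^q$ with $\cri(\lam)$ a critical leaf or collapsing quadrilateral, the paper does not perturb the quadrilateral and run the pullback construction; it invokes Lemma~\ref{l:lam-uniq} (such a $\lam$ is the \emph{unique} invariant lamination with that critical set) together with the well-known fact that there is a sequence of $q$-laminations whose critical sets converge to $\cri(\lam)$, so that by uniqueness and compactness the laminations themselves converge to $\lam$. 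Your ``slide the vertices and pull back'' device, as stated, produces sibling-invariant laminations that need not be $q$-laminations; you correctly flag this as the hard step, and the clean repair is precisely the paper's route through Lemma~\ref{l:lam-uniq} and the density of minors of $q$-laminations in $\qml$. Second, you prove the ``only if'' direction explicitly, using the rigidity statements of Lemma~\ref{l:per-rigid} to show that dendritic laminations with critical sets having more than four vertices and hyperbolic laminations are isolated (rigidity of the critical set plus uniqueness of the $q$-lamination determined by it); the paper's written proof covers only the ``if'' direction and merely lists the isolated laminations afterwards, so here your argument supplies a step the paper leaves implicit. With the approximating sequence taken from $\L^q_2$ as above rather than manufactured by perturbation, your proof is a correct and in fact somewhat more complete rendition of the paper's argument.
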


In order to prove Corollary \ref{c:non-const-lim}, we need the following lemma.

\begin{lem}
\label{l:lam-uniq}
Suppose that $\lam$ is a $\si_2$-invariant $q$-lamination whose critical set is a generalized critical quadrilateral.
Then $\lam$ is the only $\si_2$-invariant geodesic lamination with critical set $\cri(\lam)$.
\end{lem}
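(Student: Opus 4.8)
The plan is to show that the critical set $Q=\cri(\lam)$ determines the lamination completely through the operation of taking preimages, so that any $\si_2$-invariant geodesic lamination $\lam'$ with $\cri(\lam')=Q$ must coincide with $\lam$. Write $m=\si_2(Q)$ for the common minor. Since $Q$ is a generalized critical quadrilateral it is finite, so neither $\lam$ nor $\lam'$ is hyperbolic (Definition~\ref{d:hyperb}), and $Q$ is the \emph{unique} critical set of each. The basic reduction is that it suffices to compare each such lamination with the closure $\Lambda_0$ of the grand orbit of $Q$ and to prove that $\lam=\Lambda_0=\lam'$.

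The engine of the argument is \emph{uniqueness of pullbacks}. The critical chords contained in $Q$ (the chord $Q$ itself when $Q$ is a critical leaf, or its two critical diagonals when $Q$ is a collapsing quadrilateral), together with the edges of $Q$, cut $\cdisk$ into finitely many regions on each of which $\si_2$ is injective. Consequently, for any chord $\ell$ that is unlinked with $m$, the preimage $\si_2^{-1}(\ell)$ contains exactly one pair of disjoint chords each of which is unlinked with $Q$; these are the only candidates for preimage leaves in an invariant lamination whose critical set is $Q$, since the ``incompatible'' preimages would cross a critical chord of $Q$. As $m$ is a leaf of both $\lam$ and $\lam'$, every leaf in the forward orbit of $Q$ is unlinked with $m$, so this uniqueness applies at every stage. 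Starting from $Q$ and alternately applying $\si_2$ and taking compatible preimages, one builds the grand orbit of $Q$; by sibling invariance (Definition~\ref{d:siblinv}) and closedness of laminations, its closure $\Lambda_0$ is contained in \emph{both} $\lam$ and $\lam'$.

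It remains to prove that neither $\lam$ nor $\lam'$ has a leaf outside $\Lambda_0$, i.e.\ that no gap of $\Lambda_0$ can be subdivided by a leaf while $Q$ remains the critical set. Here I would use the structure of gaps of a $\si_2$-invariant geodesic lamination with finite critical set: every gap is eventually mapped to a periodic gap (by \cite{kiw02} for infinite gaps, and by Thurston's absence of wandering non-precritical polygons \cite{thu85} for finite ones), and a periodic gap is finite, a caterpillar gap, a Siegel gap, or a Fatou gap of degree two --- the last being excluded because $\cri=Q$ is finite. Suppose a leaf $\ell'$ lay in a gap $G$ of $\Lambda_0$. Its forward images remain strictly inside gaps of $\Lambda_0$ (an image landing on $\bd(\Lambda_0)$ would make $\ell'$, by pullback uniqueness, an already-present leaf of $\Lambda_0$), so some image sits inside a periodic gap $G_0$ as a genuine chord. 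If $G_0$ is finite, the first-return map rotates its vertices and the orbit of a diagonal under this rotation produces mutually crossing diagonals, impossible in a lamination. If $G_0$ is infinite, then by Lemmas~\ref{l:cater} and~\ref{l:siegel} the first-return map on $\bd(G_0)$ is monotonically semiconjugate to an irrational rotation (or to a shift), the forward orbit of the image of $\ell'$ is dense along $\bd(G_0)$, and again two of its images cross. In every case we reach a contradiction, whence $\lam=\Lambda_0=\lam'$.

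The main obstacle is this last step, the \emph{gap rigidity}, and within it the infinite (Siegel and caterpillar) gaps of $\Lambda_0$: for these one cannot argue by a finite permutation and must instead exploit the semiconjugacy to an irrational rotation furnished by Lemma~\ref{l:siegel} (and Lemma~\ref{l:cater}) to see that any inserted leaf has a dense forward orbit and hence must eventually cross one of its own images. By contrast, the pullback-uniqueness step is routine once the regions of injectivity of $\si_2$ determined by $Q$ are identified, and the finite-gap case reduces to the elementary fact that a rotation of a finite polygon sends a diagonal to a family of crossing diagonals. The hypothesis that $\lam$ is a $q$-lamination is used only to guarantee that $Q$ is genuinely realized as a critical set and that the pullback construction producing $\Lambda_0$ is consistent; the uniqueness conclusion then holds among \emph{all} $\si_2$-invariant geodesic laminations with critical set $Q$.
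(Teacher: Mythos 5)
Your overall strategy coincides with the paper's: build the minimal lamination $\Lambda_0=\hlam$ by Thurston's pullback construction starting from $Q=\cri(\lam)$, observe that it sits inside every invariant lamination with that critical set, and then rule out the subdivision of gaps of $\hlam$ by a case analysis on the type of the periodic gap an inserted leaf eventually lands in. The Siegel case and the exclusion of degree-two Fatou gaps are handled exactly as in the paper.

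There is, however, a genuine flaw in your treatment of caterpillar gaps. You claim, citing Lemmas~\ref{l:cater} and~\ref{l:siegel}, that the first-return map on the boundary of any infinite degree-one periodic gap is monotonically semiconjugate to an irrational rotation, so that an inserted chord has a dense forward orbit and must cross one of its images. Lemma~\ref{l:cater} asserts the opposite for caterpillar gaps: the return map has degree one \emph{and} periodic points in $G'$, which is incompatible with an irrational rotation. Moreover, the crossing argument really does fail there: in the model caterpillar gap with vertices $x,y,z,v_{-1},v_{-2},\dots$ accumulating at $x$, the chords $\ol{xv_{-r}}$ form a forward-invariant, pairwise unlinked family subdividing the gap into triangles, so density/crossing cannot be the mechanism. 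The correct way out (and the paper's) is to show that a caterpillar gap cannot occur in $\hlam$ at all: by Lemma~\ref{l:edges} its cycle carries a critical edge, which would have to be $\cri(\lam)$ itself, forcing $\cri(\lam)$ to be a critical leaf with a periodic endpoint --- impossible for a $q$-lamination. Note that this is exactly where the hypothesis that $\lam$ is a $q$-lamination is needed, whereas you assert it is used only to set up the pullback. A smaller issue of the same kind occurs in your finite-gap case: the paper dispenses with it by observing that a subdivided finite gap of $\hlam$ would produce non-disjoint finite gaps of the $q$-lamination $\lam$, while your rotation argument silently assumes that the first-return map of a finite periodic gap acts as a non-trivial combinatorial rotation on its vertices; that is true for $\si_2$ but requires justification and again does not apply verbatim to an arbitrary competitor $\lam'$.
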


The proof is based on Thurston's pullback construction \cite{thu85}.

\begin{proof}[Proof of Lemma \ref{l:lam-uniq}]
Indeed, properties of $\si_2$-invariant geodesic laminations imply that
pullbacks of $\cri(\lam)$ are well defined on each finite step; moreover, these pullbacks are all sets from $\lam$. Furthermore, the closure $\hlam$ of their entire family is a $\si_2$-invariant
geodesic lamination itself, and since $\lam$ is closed it follows that
$\hlam\subset \lam$.
We claim that $\hlam=\lam$.
Indeed, suppose otherwise.
Then $\hlam$ must contain a gap, say, $U$ that itself is the union of $s>1$ gaps of $\lam$ and,
therefore, \emph{$U$ contains leaves of $\lam$ \textbf{inside}}.
If $U$ is finite, it follows that there are non-disjoint finite gaps of $\lam$.
The latter is impossible as $\lam$ is a $q$-lamination.
Thus, $U$ is infinite.
Mapping $U$ forward several times, we may assume without loss of generality that $U$ is periodic of period $k$ (indeed, by \cite{kiw02}, all infinite gaps of geodesic laminations are (pre)periodic).

Consider several cases.
First suppose that $U$ is a caterpillar gap.
Then the critical leaf of $U$ (or of a gap in the forward orbit of $U$) must coincide with the critical set of $\lam$.
Therefore, $\lam$ has a critical leaf with a periodic endpoint, which is impossible for a $q$-lamination.

Now, suppose that $U$ is a Siegel gap. It is well-known (e.g., it
follows from Lemma~\ref{l:edges}) that all edges of $U$ are
(pre)critical and that, therefore, some image $\si_2^t(U)$ of $U$ has a
critical edge $\ell$; it then follows that $\cri(\lam)=\ell$, that all
edges of $U$ are pullbacks of $\ell$, and that under the map $\psi$
collapsing edges of $U$ to points any chord $\hell$ connecting vertices
of $U$ projects under $\psi$ to a non-trivial chord $\psi(\hell)$ of
the unit circle. Since $\psi$ semiconjugates $\si_2^k|_{\bd(U)}$ to an
irrational rotation, the chord $\psi(\hell)$ in the unit disk will
eventually intersect itself under the irrational rotation in question
which implies the same fact for the above chord $\hell\subset U$. We
see that $\hell$ cannot be a leaf of any lamination, a contradiction
with the above.

Finally, suppose that $\si_2^k|_{\bd(U)}$ is of degree $2$. Then some
image of $U$ is an infinite gap $V$ such that $\si_2|_{\bd(V)}$ has
degree two. On the other hand, $\cri(\hlam)=\cri(\lam)$ is a
generalized critical quadrilateral, a contradiction with the existence
of $V$. Hence this case is impossible either, and so
$\lam=\hlam=\lam^q$ is a unique geodesic lamination with critical set
$\cri(\lam)$.
\end{proof}

We are now ready to deduce Corollary \ref{c:non-const-lim} from Lemma \ref{l:lam-uniq}.

\begin{proof}[Proof of Corollary \ref{c:non-const-lim}]
By Theorem~\ref{t:limitlam}, if $\lam$ satisfies the conditions of the corollary, then $\lam\in \olq$.
Clearly, geodesic laminations described in (2)(a) and (2)(b) above do not belong to $\L^q_2$.
Since they belong to $\olq$, it follows that they are limits of sequences of pairwise distinct $\si_2$-invariant geodesic q-laminations.

Consider now case (1).
Then $\cri(\lam^q)$ is finite and $\cri(\lam)$ is the convex hull of two edges or vertices of $\cri(\lam^q)$ with the
same $\si_2$-image.
Suppose that $\cri(\lam^q)$ is a polygon with more than four vertices.
Then $\lam\neq \lam^q$ (in fact, $\lam\supsetneqq \lam^q$).
Hence $\lam\notin \L^q_2$, and, as above, $\lam$ is the limit of a sequence of pairwise distinct $\si_2$-invariant geodesic q-laminations.

Consider now the case when $\lam^q$ has a generalized quadrilateral as
its critical set $\cri(\lam^q)$. In this case it may happen that $\lam$
has a critical leaf that is a diagonal of a quadrilateral
$\cri(\lam^q)$ so that $\lam\neq \lam^q$; as before this implies that
$\lam$ is the limit of a sequence of pairwise distinct
$\si_2$-invariant geodesic laminations.

It remains to consider the case when $\lam=\lam^q$ is generated by an equivalence relation $\sim$ and has a critical set $\cri(\lam)$ that is either a critical quadrilateral or a critical leaf.
Let us show that then $\lam$ is the limit of a non-constant sequence of $q$-laminations.
By Lemma \ref{l:lam-uniq}, the lamination $\lam$ is the unique $\si_2$-invariant geodesic
lamination with critical set $\cri(\lam)$.
Now, the fact that $\lam$ is the limit of a sequence of pairwise distinct $q$-laminations follows from the uniqueness of
$\lam$ and the fact that, due to well-known properties of the combinatorial Mandelbrot set, there is a sequence of geodesic laminations $\lam_i$ with critical sets $\cri(\lam_i)\to \cri(\lam)$ (recall that we are considering the case when $\cri(\lam)$ is a generalized quadrilateral).
This completes the proof of the corollary.
\end{proof}

Thus, isolated geodesic laminations in $\olq$ are the following:
\begin{enumerate}
 \item dendritic geodesic laminations with critical sets that have more than four vertices,
 \item hyperbolic laminations.
\end{enumerate}
If we remove them from $\olq$, we will obtain the closed space $\L^l_2\subset \olq$ of all $\si_2$-invariant geodesic laminations that are non-isolated in $\olq$.
The minor equivalence on $\L^l_2$ is defined in the same way as the minor equivalence on $\olq$:
two laminations are \emph{minor equivalent} if their minors can be connected by a chain of non-disjoint minors.
However, we only consider minors of laminations from $\L^l_2$.
Therefore, the minor equivalence on $\L^l_2$ is not a restriction of the minor equivalence on $\olq$.
In particular, some classes of minor equivalence on $\L^l_2$ are slightly different from the restrictions of the corresponding classes of minor equivalence on $\olq$.
Consider several cases; the analysis below follows from Theorem~\ref{t:limitlam} and Corollary~\ref{c:non-const-lim}.

(1) Take a dendritic geodesic lamination $\lam$ generated by a laminational equivalence relation $\sim$ such that $\cri(\lam)$ has more than four vertices.
Then there are several geodesic laminations in $\L^l_2$ with critical sets being generalized critical quadrilaterals in $\cri(\lam)$.
These laminations form one class $A$ of the minor equivalence in $\L^l_2$.
However, unlike for $\olq$, the geodesic lamination $\lam$ itself does not belong to $\L^l_2$ and therefore is
not included into $A$.
Still, it follows that the convex hull of the union of all minors of laminations in $A$ is the same for $\L^l_2$ and for $\olq$.

(2) Now let $\lam$ be a dendritic geodesic lamination such that $\cri(\lam)$ is either a quadrilateral or a critical leaf. Then, by Corollary~\ref{c:non-const-lim}, we have $\lam\in \L^l_2$.
Hence, in this case, the corresponding class of minor equivalence in $\L^l_2$ consists of $\lam$ itself and two geodesic laminations obtained by inserting a critical diagonal in $\cri(\lam)$ and then pulling it back.
In other words, this class coincides with the corresponding class in $\olq$.
Of course, as in case (1), the convex hull of the union of minors remains the same as for $\olq$.

(3) In the case of a Siegel geodesic lamination $\lam$, we have that $\lam$ belongs to both $\L^l_2$ and $\olq$.
The corresponding class of the minor equivalence then consists of $\lam$ only.

(4) Consider a hyperbolic geodesic lamination $\lam$ with a critical gap $U$ of period $n$ such that the unique edge $M$ of $U$ of period $n$ is a fixed return leaf.
Then $\lam$ itself does not belong to $\L^l_2$.
However, three closely related geodesic laminations form a class of minor equivalence.
Two of them have critical leaves with endpoints at endpoints of $M$.
The third one has a collapsing quadrilateral based on $M$.
This yields the same convex hull of the union of minors as before in case of $\olq$.

(5) Finally, consider a hyperbolic geodesic lamination $\lam$ with a
critical gap $U$ of period $n$ such that the unique edge $M=\ol{ab}$ of
$U$ of period $n$ is \emph{not} a fixed return leaf. It follows that
neither $\lam$ nor the geodesic lamination with a collapsing
quadrilateral based on $M$ belong to $\L^l_2$. Therefore, there are
\emph{\textbf{two non-equivalent} geodesic laminations with critical
leaves $\ell_a$ and $\ell_b$ with endpoints $a$ and $b$, respectively} that can be
associated with $\lam$, and so there are \emph{\textbf{two classes of
minor equivalence}, generated by $\ell_a$ and $\ell_b$, respectively},
that can be associated with $\lam$.

Let $A$ be a class of minor equivalence in $\L^l_2$.
Define $m(A)$ as the convex hull of the union of the corresponding minors.
The association $A\mapsto m(A)$ is similar to that made in \cite{bopt16} for $\olq$.
Let $A'$ be the minor equivalence class in $\olq$ containing $A$.
The analysis made above implies that, in cases $(1)$ -- $(4)$, we have $m(A)=m(A')$.
In cases (2) and (3), we have $A=A'$.
In cases (1) and (4), the class $A'$ consists of $A$ and the lamination $\lam^q$ generated by the corresponding laminational equivalence.

In case (5) the situation is different.
The two distinct classes of minor equivalence in $\L^l_2$ correspond to critical leaves $\ell_a$ and $\ell_b$ and give rise to sets $\{\si_2(a)\}$ and $\{\si_2(b)\}$.
These singletons replace the minor $m(\lam)=\ol{\si_2(a) \si_2(b)}$ that corresponds to $\lam$ in $\qml$.
In other words, the leaf $m(\lam)$ is erased from $\qml$ and replaced by its two endpoints.
This is what we informally referred to in the beginning of the paper as ``unpinching'' of the circle.
In the end, this yields a new parametric geodesic lamination $\qml^l$,
the corresponding laminational equivalence $\sim_{\qml^l}$,
and the corresponding quotient space $\M^l_2$.
Let $\psi^l:\L^l_2\to \uc/\qml^l$ be the quotient map.

\begin{thm}\label{t:main1}
The map $\psi^l:\L^l_2\to \uc/\qml^l$ is continuous.
Thus, the partition of $\L^l_2$ into classes of minor equivalence is upper semi-continuous,
and the quotient space of{} $\L^l_2$ with respect to the minor equivalence is homeomorphic to $\uc/\qml^l$.
\end{thm}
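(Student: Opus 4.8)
The plan is to mirror the proof of Theorem~\ref{t:main} (Theorem 3.10 of \cite{bopt16}), transferring the three conclusions — continuity of $\psi^l$, upper semi-continuity of the minor-equivalence partition, and the homeomorphism onto $\uc/\qml^l$ — from the full space $\olq$ to the perfect subspace $\L^l_2$, while carefully accounting for the differences introduced by ``cleaning'' the leaves of type (5). The key structural input is already assembled: Corollary~\ref{c:non-const-lim} identifies $\L^l_2$ as the maximal perfect subset of $\olq$ (equivalently, the non-isolated laminations), and the case analysis (1)--(5) preceding the statement precisely compares each minor-equivalence class $A$ in $\L^l_2$ with the class $A'\supseteq A$ in $\olq$ and its minor set $m(A')$.

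First I would establish continuity of $\psi^l$. Since $\L^l_2\subset \olq$ is a closed subspace, and since the map $\psi\colon\olq\to\uc/\qml$ of Theorem~\ref{t:main} is continuous, the idea is to compose with the natural ``cleaning'' identification. Concretely, $\qml^l$ is obtained from $\qml$ by erasing exactly the leaves of type (5) and replacing each by its two endpoints; this induces a continuous monotone surjection $q\colon\uc/\qml\to\uc/\qml^l$ collapsing each such pair of endpoints' images appropriately (and, conversely, ``splitting'' the relevant point). I would verify that $\psi^l=q\circ(\psi|_{\L^l_2})$ on the laminations of types (1)--(4), where classes agree up to $\lam^q$, and that on type (5) the two separate classes generated by $\ell_a$ and $\ell_b$ map to the two distinct singletons $\{\si_2(a)\}$, $\{\si_2(b)\}$ exactly as prescribed by $\sim_{\qml^l}$. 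Continuity of $\psi^l$ then follows from continuity of $\psi$ and of $q$, once one checks that the restriction to $\L^l_2$ does not introduce discontinuities at the boundary between case (5) and its neighbors — this is the point where the limits of deeper renormalizations accumulate.

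The upper semi-continuity of the partition into minor-equivalence classes follows formally once $\psi^l$ is shown continuous with the property that its fibers are exactly the minor-equivalence classes: a continuous map from a compact metric space onto a Hausdorff space has an upper semi-continuous decomposition into point-preimages. So the substantive work is to prove that the fibers of $\psi^l$ coincide with the minor-equivalence classes in $\L^l_2$, i.e. that $\psi^l(\lam_1)=\psi^l(\lam_2)$ if and only if $\lam_1$ and $\lam_2$ are minor equivalent \emph{within} $\L^l_2$. The nontrivial direction reuses the fact from Theorem~\ref{t:limitlam} that each $q$-lamination corresponds to finitely many laminations in $\olq$ with connected union of minors; I would intersect this finite family with $\L^l_2$ and invoke the case analysis to confirm the fiber matches the $\L^l_2$-class. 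Finally, the homeomorphism onto $\uc/\qml^l$ follows from the standard fact that a continuous bijection from the quotient of a compact space onto a Hausdorff space is a homeomorphism, once injectivity of the induced map $\L^l_2/\!\!\sim\,\to\uc/\qml^l$ is established from the fiber description.

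The main obstacle I anticipate is case (5): unlike the other cases, where the $\L^l_2$-class either equals the $\olq$-class or differs from it only by removing the single hyperbolic $q$-lamination $\lam^q$, here the single $\olq$-class $A'$ (which was a \emph{connected} minor set $m(\lam)=\ol{\si_2(a)\si_2(b)}$) genuinely \emph{splits} into two distinct $\L^l_2$-classes whose minor sets are the two disjoint endpoints. I must show that this splitting is compatible with the topology — that approaching such a boundary leaf from within $\L^l_2$ forces the limiting behaviour to respect the split rather than reconnect — and that $\qml^l$ is genuinely perfect (no isolated leaves reappear after cleaning), so that $\sim_{\qml^l}$ is a well-defined laminational equivalence relation and $\uc/\qml^l$ is the correct target. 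Verifying that exactly the type-(5) leaves must be erased, and that no further leaves become isolated as a consequence, is where the delicate combinatorics of the Mandelbrot set's hyperbolic-component adjacencies enters, and this is the step that requires the most care.
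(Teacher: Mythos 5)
The paper gives no formal proof of Theorem~\ref{t:main1}: the statement is presented as a consequence of the case analysis (1)--(5) immediately preceding it, together with the argument of Theorem~3.10 of \cite{bopt16}. Your overall strategy --- transfer the proof of Theorem~\ref{t:main}, identify the fibers of $\psi^l$ with the minor-equivalence classes via the case analysis, and then get upper semi-continuity and the homeomorphism by the standard compact-to-Hausdorff arguments --- is exactly the intended one, and those parts of your proposal are fine.

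However, your mechanism for the first (and only substantive) claim, the continuity of $\psi^l$, is broken. You propose to write $\psi^l=q\circ(\psi|_{\L^l_2})$ for a continuous surjection $q\colon\uc/\qml\to\uc/\qml^l$ induced by the cleaning. No such map exists: cleaning \emph{removes} identifications, so $\sim_{\qml^l}$ is strictly \emph{finer} than $\sim_\qml$ precisely at the type-(5) leaves. The natural continuous monotone surjection therefore runs in the opposite direction, $\uc/\qml^l\to\uc/\qml$, and the relation you describe --- which would have to ``split'' the point of $\uc/\qml$ corresponding to an erased minor $m=\ol{\si_2(a)\si_2(b)}$ into two points of $\uc/\qml^l$ --- is not even single-valued, let alone continuous. (Your own parenthetical ``collapsing \dots and, conversely, splitting'' signals the contradiction.) Consequently continuity of $\psi^l$ cannot be inherited from continuity of $\psi$ by composition; it must be checked directly. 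The point that saves the day, and that you should make explicit, is that in case (5) the hyperbolic lamination $\lam$ and the lamination with the collapsing quadrilateral based on $M$ are both excluded from $\L^l_2$, so any sequence in $\L^l_2$ converging to the lamination with critical leaf $\ell_a$ has critical sets converging to $\ell_a$ and hence minors converging to the single point $\si_2(a)$ (not to the whole leaf $m$); since $\sim_{\qml^l}$ is a closed laminational equivalence relation, the images of these minors then converge to the $\qml^l$-class of $\si_2(a)$, which is $\psi^l$ of the limit. With that direct verification in place of the faulty factorization, the rest of your argument goes through as in \cite{bopt16}.
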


It follows that the corresponding parametric geodesic lamination $\qml^l$, taken as a subset of the space of all chords in $\cdisk$, is perfect.

The dendritic versions $\sim^{ld}$ and $\M^{ld}_2$ of $\sim^{l}$ and $\M^{l}_2$ were given in
the Introduction, so we refer the reader there; notice that the dendritic versions
here are well-defined as $\qml^l$ is perfect.

\section{Non-renormalizable geodesic laminations}\label{s:non-renorm}

In this short section we suggest two spaces of $\si_2$-invariant
geodesic laminations that can be obtained from $\M^c_2$ through a
process of ``unpinching'' (i.e., replacing some leaves of $\qml$ by
their endpoints). Both spaces are described by the same parametric
geodesic lamination $\qml^{nr}$ modifying Thurston's quadratic minor
lamination $\qml$ and such that infinite gaps of it are all pairwise
disjoint. As before, this geodesic lamination can be interpreted in two
different ways resulting in two distinct but related quotient spaces.
First, we may consider various vertices and edges of infinite gaps of
the $\qml^{nr}$ as distinct classes of the corresponding equivalence
relation; then infinite gaps of $\qml^{nr}$ become closures of bounded
complementary domains of the quotient space $\cdisk/\qml^{nr}=\M^{nr}_2$. Then we
also consider the dendritic version $\M^{nrd}_2$ of $\M^{nr}_2$. The main ideas
were already discussed in the Introduction.


Consider a maximal by inclusion \emph{canonical} copy $X\subsetneqq \M^c_2$ of the combinatorial Mandelbrot set $\M^c_2$.
In other words, consider a copy $X\subsetneqq \M^c_2$ of $\M^c_2$
that can be described as follows.
There exist a special \emph{ancestor laminational equivalence} $\sim_X$ and the associated $\si_2$-invariant
\emph{ancestor geodesic lamination} $\lam_X$ with the following properties.
The lamination $\lam_X$ is hyperbolic and has a critical Fatou gap $U_X$ of period, say, $n$.
Let $V_X=\si_2(U_X)$ be the postcritical gap of $\lam_X$.
The boundary $\bd(V_X)$ can be projected to $\uc$ by means of collapsing its edges.
This projection will be denoted by $\ph_X$.
It semiconjugates $\si_2^n|_{\bd(V_X)}$ and $\si_2$.
Moreover, if we consider $\qml\subset \cdisk$ and lift it to $V_X$ using $\ph_X$, we will
obtain \emph{exactly} the restriction of $\qml$ onto $V_X$; the quotient space of $V_X$ under this restriction is \emph{exactly} $X$.

This description applies to any canonical copy of $\M^c_2$ in $\M^c_2$; in fact, it can be viewed as
the definition of a canonical copy of $\M^c_2$ inside $\M^c_2$.
Since we only want maximal by inclusion canonical copies $X$ of $\M^c_2$ in $\M^c_2$, we need to specify properties
of $\lam_X$.
Well-known facts about the combinatorial Mandelbrot set imply that maximal gaps $V_X$ can be of two types.

(1) There is a countable family of such gaps, each of which is based on an edge of the Main Cardioid.
The corresponding geodesic lamination $\lam_X$ has a finite invariant rotational gap $G_X$ such that
$\si_2(G_X)=G_X$, and the map $\si_2|_{G_X}$ can be viewed as a ``combinatorial rotation''
(each edge of $G_X$ under $\si_2$ ``jumps'' over the same number of edges of $G_X$ in the positive direction).
Moreover, there is a cycle of Fatou gaps attached to $G_X$ at its edges.
The longest edge $M_X$ of $G_X$ is the major of $\lam_X$ at which the critical gap $U_X$ is attached to $G_X$.
The edge $\si_2(M_X)=m_X$ is the minor of $\lam_X$.
The gap $V_X=\si_2(U_X)$ is attached to $G_X$ at $m_X$.
As explained above, $X$ is a copy of $\M^c_2$ that can be viewed as a ``pinched gap'' $V_X$.
There are countably many such copies $X$ of $\M^c_2$ and, respectively, countably many gaps $V_X$ attached at edges of the Main Cardioid
to the Main Cardioid itself.
The remaining vertices of the Main Cardioid correspond to laminations with invariant Siegel gap.

(2) Otherwise, there are maximal by inclusion copies $X$ of $\M^c_2$ in $\M^c_2$ that are generated by gaps $V_X$ of different nature.
In these cases, $\lam_X$ is a hyperbolic lamination such that $U_X$ is a periodic critical Fatou gap
of $\lam_X$ of some period $n$ with a fixed return edge of period $n$.
Notice that these are only \emph{maximal by inclusion} Fatou gaps $U_X$
of some period
$n$ such that $U_X$ has a fixed return edge of period $n$; there exist other
hyperbolic geodesic laminations with similar periodic Fatou gaps but
we consider only maximal ones among them.

Overall, we have countably many sets $X$ and countably many gaps $V_X$.
All these gaps are pairwise disjoint. In order to construct a new
parametric geodesic lamination $\qml^{nr}$ (``nr'' stands for
``non-renormalizable'') we proceed as follows. First, in case (2) above
we replace sets $X$ by the corresponding gaps $V_X$. In other words, we
erase all minors inside gaps $V_X$. In $\qml$, the gap $V_X$ was
``pinched'' by a geodesic lamination that copies $\qml$ itself; now we ``unpinch''
$V_X$. In case (1) above we do almost the same with one exception. Indeed,
let $X$ be a canonical copy of $\M^c_2$ associated with a lamination $\sim_X$ and
a postcritical Fatou gap $V_X$ of $\lam_X$. Then we erase not only all minors
of $\qml$ inside $V_X$ but also the edge $m_X$ of $V_X$ which is an edge of the Main
Cardioid. This removes the ``barrier'' between $V_X$ and the Main Cardioid. Evidently,
it leads to a big gap which unites all gaps $V_X$ from case (1) above and all Siegel vertices
of the Main Cardioid into one new gap $\ca^{nr}$ that can be viewed as the central gap
of $\qml^{nr}$ replacing in this capacity the Main Cardioid.

From the point of view of $\si_2$-invariant geodesic laminations, vertices and edges
$\ell$ of $V_X$ represent geodesic laminations that can be constructed as follows.
Take, say, an edge $\ell$ of $V_X$.
Then take the convex hull of its full preimage in $U_X\cap\uc$;
this will produce a collapsing  quadrilateral $Q_\ell\subset U_X$.
Using Thurston's pullback construction, we can repeat this process infinitely many times.
Then take the closure of the resulting set of leaves.
This, according to \cite{thu85}, will be a $\si_2$-invariant geodesic lamination whose critical
set projects to a critical leaf under the map from $\bd(U_X)$ to $\uc$ collapsing of all edges of $U_X$.

Thus, again, the idea is to choose a specific class of geodesic
laminations (in this case, these are all renormalizable geodesic
laminations) and replace them by geodesic laminations that have either
critical leaves or collapsing quadrilaterals (projecting to critical
leaves under the canonical collapse of edges of the corresponding
postcritical Fatou gap). In other words, $\qml^{nr}$, $\sim^{nr}$, and the
corresponding quotient space $\M^{nr}_2$ describe the family of
laminations that can be characterized as follows. First, these are all
non-renormalizable $q$-laminations. Second, these are all geodesic
laminations obtained by inserting a critical leaf or a quadrilateral
projecting to a critical leaf under the collapse of individual edges in
a maximal, by inclusion, critical Fatou gap.

Each edge of the gap $V_X$ of period $n$ is an eventual pullback of a
unique edge of $V_X$ of period $n$, which is in fact the minor $m_X$ of
$\lam_X$. In $\M^{nr}_2$, edges of $V_X$ represent cutpoints of
$\M^{nr}_2$ belonging to the boundaries of bounded domains inside
$\M^{nr}_2$. These are exactly the places at which $V_X$ connects with
the rest of $M^{nr}_2$. Since all edges of $V_X$ are preimages of the
unique edge of $V_X$ of period $n$, it follows that $\M^{nr}_2$ grows
from $V_X$ (viewed as a parametric gap) at specific places
corresponding to specific laminations inside the critical gap $U_X$.
These are laminations that, after renormalization (that is, after
projecting $U_X$ to the entire circle), become laminations with a
critical leaf eventually mapped to the point of $\uc$ with argument
$0$.

All of the above applies to all gaps $V_X$ from case (2). Moreover, it
almost completely applies to the gaps $V_X$ from case (1). The only exception
to this is that the laminations from the Main Cardioid itself will not
have their minors represented as the edges of $\ca^{nr}$.

Finally, well-known facts about the combinatorial Mandelbrot set imply
that $\qml^{nr}$ is perfect. Thus the parametric geodesic lamination
$\qml^{nr}$ can in fact be interpreted in a different fashion giving
rise to the dendritic versions $\sim^{nrd}$ and $\M^{nrd}_2$ of
$\sim^{nr}$ and $\M^{nr}_2$. The main idea was explained in the
introduction. The space $\M^{nrd}_2$ represents the model for quotient
of the space of all $q$-laminations determined by the equivalence
relation under which two $q$-laminations $\lam_1, \lam_2$ are
identified in the following two cases: (1) is the oldest ancestor of
them is the empty lamination (this means that either lamination is a tuning of
a lamination from the Main Cadrioid), or (2) $\lam_1$ and $\lam_2$ have the same
oldest ancestor lamination which is not the empty lamination.

\bibliographystyle{amsalpha}

\end{document}